\newtheorem*{theorem*}{Theorem}
\theoremstyle{definition}
\newtheorem*{definition*}{Definition}
\newtheorem*{case*}{Case}
\newtheorem*{subcase*}{Subcase}
\newtheorem*{subsubcase*}{Subsubcase}
\theoremstyle{plain}
\newtheorem{thm}{Theorem}[section]
\newtheorem{lem}[thm]{Lemma}
\newtheorem{prop}[thm]{Proposition}
\theoremstyle{definition}
\theoremstyle{remark}
\numberwithin{equation}{section}
\newcommand{\AVC}{\text{AVC}} 
\newcommand{\quotes}[1]{``#1''} 
\newcommand{\AxisRotator}[1][rotate=0]{
	\tikz [x=0.25 cm, y=0.75 cm, line width=0.2 ex, -stealth, #1] \draw (0,0) arc (-150:150:0.5 and 0.5);
}
\providecommand{\keywords}[1]{\noindent \textit{Keywords:} #1}
\providecommand{\subject}[1]{\noindent \textit{Mathematics Subject Classification:} #1}
\title{Dihedral Tilings of the Sphere by Regular Polygons and Quadrilaterals I: Squares and Rhombi}
\author[]{Hoi Ping LUK}
\affil[]{The Hong Kong University of Science \& Technology} 
\affil[]{email: hoi@connect.ust.hk}
\begin{document}
\maketitle

\begin{abstract} We classify the dihedral edge-to-edge tilings of the sphere by squares and rhombi. \\

\keywords{Classification, Spherical tilings, Dihedral tilings, Quadrilaterals, Division of spaces} \\

\subject{05B45, 52C20, 51M09, 51M20}
\end{abstract}

\section{Introduction}

There is a long fascination on tilings throughout human history. Recently, we see three major breakthroughs in the studies of tilings. One of them is the discovery of aperiodic monotile of the plane \cite{smkgs} and the other two are classifications of tilings of the sphere. In the classifications, one is the complete classification of spherical tilings by regular polygons \cite{aehj, joh, zal}. The other is the complete classification of the monohedral edge-to-edge spherical tilings, which was pioneered by Sommerville \cite{som} a century ago and has been settled by a collective effort \cite{awy, ay, cl, cl2, cly, gsy, ua, wy, wy2}. 

This paper is the first of the series to classify dihedral tilings of the sphere, where one prototile is a regular polygon and the other is a rhombus. The two prototiles in \cite{luk} are one regular polygon and a quadrilateral with two distinct edge lengths. In this paper, the two prototiles are one square and one rhombus. Both are equilateral quadrilaterals with edge combination $x^4$. The square (shaded) and the rhombus (unshaded) are illustrated in Figure \ref{Fig-quad-a4-angles}. Throughout our discussion, the shaded tiles are always squares. We assume that the degree of a vertex is $\ge3$.

\begin{figure}[h!] 
\centering
\begin{tikzpicture}

\tikzmath{
\r=0.8;
\gon=4;
\th=360/\gon;
\x=\r*cos(\th/2);
}

\begin{scope} 

\fill[gray!40]
	(0.5*\th:\r) -- (1.5*\th:\r) -- (2.5*\th:\r)  -- (3.5*\th:\r) -- cycle
;

\foreach \a in {0,...,3} {
\draw[rotate=\th*\a]
	(90-0.5*\th:\r) -- (90+0.5*\th:\r) 
;

}

\foreach \c in {0,...,3} {

\node at (0.5*\th+\th*\c:0.65*\r) {\small $\alpha$};

\node at (\th+\th*\c: 1*\r) {\small $x$}; 

}

\end{scope}

\begin{scope}[xshift=3cm] 

\foreach \a in {0,...,3} {
\draw[rotate=\th*\a]
	(90-0.5*\th:\r) -- (90+0.5*\th:\r) 
;

\node at (\th+\th*\a: 1*\r) {\small $x$}; 

}

\foreach \c in {0,1} {

\node at (0.5*\th+2*\th*\c:0.65*\r) {\small $\gamma$};

}

\node at (1.55*\th:0.625*\r) {\small $\beta$};
\node at (3.5*\th:0.625*\r) {\small $\beta$};

\end{scope}

\end{tikzpicture}
\caption{The prototiles: square and rhombus}
\label{Fig-quad-a4-angles}
\end{figure}
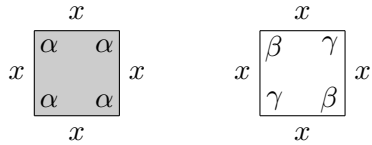


For simplicity, by {\em tilings} we mean edge-to-edge dihedral tilings of the sphere unless otherwise specified.

The main result is given below, where $f$ denotes the number of tiles. 

\begin{theorem*} The edge-to-edge dihedral tilings of the sphere by squares and rhombi are 
\begin{enumerate}[I.]
\item Earth map type: one infinite family of tilings with $f=8c-2$, where $c\ge2$, and vertices $\{ \beta^2\gamma, \alpha\beta\gamma^c  \}$;
\item Platonic and Archimedean type:
\begin{itemize}
\item one cube with $f=6$, and vertex $\{ \alpha\beta\gamma \}$;
\item two triangular fusions of the snub cube with $f=22$, and vertices $\{ \alpha\beta^2, \alpha\beta\gamma^2 \}$;
\item one quadrilateral subdivision of the truncated octahedron with $f= 30$, and vertices $\{ \beta^3, \alpha\beta\gamma^2 \}$;
\end{itemize}
\item Sporadic type: two tilings with $f=14$, and vertices $\{ \alpha^2\beta, \alpha^3\gamma \}$.
\end{enumerate}
\end{theorem*}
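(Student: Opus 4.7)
The plan is to combine the standard angle-and-Euler counts with an enumeration of admissible vertex types and then treat each resulting anglewise vertex combination (AVC) individually.

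First I would record the basic numerical identities. Writing $s$ for the number of squares, $r$ for the number of rhombi, and $f=s+r$, the edge-to-edge quadrilateral structure gives $E=2f$, Euler's formula yields $V=f+2$, and the double count of interior angles produces
\[
4s\alpha + 2r(\beta+\gamma) = 2\pi(f+2).
\]
Since each square contributes four $\alpha$'s and each rhombus contributes exactly two $\beta$'s and two $\gamma$'s, we also have
\[
\#\alpha = 4s, \qquad \#\beta = \#\gamma = 2r,
\]
and the forced equality $\#\beta = \#\gamma$ will serve as a crucial balancing constraint throughout.

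Next I would enumerate the admissible vertex types $\alpha^a\beta^b\gamma^c$. Each satisfies $a\alpha+b\beta+c\gamma=2\pi$ with $a+b+c\ge 3$. The geometric inequalities $\alpha>\pi/2$ and $\beta+\gamma>\pi$, both forced by positive spherical area, already sharply bound $a$ and $b+c$, leaving only finitely many combinatorial candidates. Because the two prototiles share the edge length $x$, the spherical law of cosines applied to their half-diagonals collapses the angle data to a one-parameter family: $x$ determines $\alpha$, and $x$ together with one of $\beta,\gamma$ determines the other. Each candidate vertex equation therefore restricts $x$ to a finite set of values, and most candidate pairs are mutually incompatible, leaving only a short list of AVCs to test.

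For each surviving AVC I would solve the balance system $\#\alpha=4s$, $\#\beta=\#\gamma=2r$, and $\sum_v n_v=f+2$ for the vertex-type multiplicities $n_v$, which pins down $f$ exactly (or yields the one-parameter family $f=8c-2$ in the Earth-map case). Then I would realise each tiling by propagation: starting at a seed vertex, the rigid shape of the prototiles forces the tiles around each neighbour up to a chirality choice, so a breadth-first extension either closes up to a valid sphere tiling or produces a contradiction. The Earth-map family should arise as alternating rhombus bands capped at two $\beta^2\gamma$ poles; the cube, the two snub-cube fusions, and the truncated-octahedron subdivision should each be recognisable as quadrilateral augmentations of their Platonic or Archimedean skeletons; and the sporadic $f=14$ tilings will require the most careful construction, their pair count reflecting chirality.

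The main obstacle is the AVC-exclusion step: showing that no admissible vertex type outside the listed families can coexist in an actual tiling. The arguments here combine parity checks from $\#\beta=\#\gamma$, local neighbourhood analysis where the edge-to-edge condition and the prototile shapes force the local arrangement of squares and rhombi, and direct numerical contradictions whenever two candidate vertex equations pin $x$ to incompatible values. A secondary obstacle is verifying the uniqueness of each construction up to reflection, which in practice requires tracking the chirality of propagation through a full circuit of the sphere and confirming closure; explicit construction itself is routine once the AVC is pinned down, but the combinatorial exclusion and global-closure checks carry almost all the labour.
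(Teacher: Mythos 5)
Your overall architecture (angle and Euler counts, a global $\#\beta=\#\gamma$ balance, AVC enumeration, then propagation) is in the same spirit as the paper, but there is a genuine gap at the enumeration step. You claim that $\alpha>\frac{1}{2}\pi$ and $\beta+\gamma>\pi$ ``sharply bound $a$ and $b+c$, leaving only finitely many combinatorial candidates.'' This is false: these inequalities bound $a$ and $b$ (since one can show $\beta>\alpha>\frac{1}{2}\pi$), but they give no upper bound on $c$, because $\gamma$ can be arbitrarily small --- indeed the earth map family realises $\alpha\beta\gamma^c$ for every $c\ge2$, with $\gamma\to0$ as $c\to\infty$. So the candidate list is genuinely infinite, and it cannot be disposed of by pairwise numerical incompatibility as you propose. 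The paper gets around this by first invoking $v_3=8+\sum_{h\ge4}(h-4)v_h>0$ to force a degree-$3$ vertex, showing (via the ordering $\pi>\beta>\alpha>\gamma$, which follows from $\tan^2\frac{1}{2}\alpha=\tan\frac{1}{2}\beta\tan\frac{1}{2}\gamma$, and via the forced vertices $\alpha\beta\cdots$ and $\alpha\gamma\cdots$ at the two ends of any square--rhombus edge) that this degree-$3$ vertex is one of only six types, and then running a separate proposition for each. Within each case the degree-$3$ vertex equation either pins the angles numerically or (in the $\beta^2\gamma$ case) leaves exactly the one-parameter freedom, and the infinite tail of high-$c$ vertices such as $\gamma^c$, $\alpha^a\gamma^c$, $\beta\gamma^c$ is killed by local adjacency deductions (the forced tile arrangements around $\alpha\vert\gamma$, $\gamma\vert\gamma\vert\gamma$, $\gamma\vert\alpha\vert\gamma$, etc.), not by numerics.

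A second, smaller gap: you never address geometric existence of the infinite family. Writing down the AVC $\{\beta^2\gamma,\alpha\beta\gamma^c\}$ and assembling the combinatorial earth map is not enough; one must show that for every integer $c\ge2$ there is an actual $\gamma\in(0,\frac{1}{2}\pi)$ satisfying both vertex equations together with the square/rhombus compatibility relation $\tan^2\frac{1}{2}\alpha=\tan\frac{1}{2}\beta\tan\frac{1}{2}\gamma$. The paper does this by solving for $c$ as a function $c(\gamma)$, checking monotonicity and the endpoint limits, and applying the Intermediate Value Theorem. Your propagation step would also need the adjacency lemmas that a square edge-adjacent to a rhombus forces both $\alpha\vert\beta\cdots$ and $\alpha\vert\gamma\cdots$ to be vertices, and that $\beta\vert\beta\cdots$ is a vertex if and only if $\gamma\vert\gamma\cdots$ is; these are where almost all of the paper's case analysis actually happens, and without them the ``breadth-first extension'' you describe has no engine to drive it.
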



\begin{figure}[h!]
\centering
\begin{tikzpicture}[scale=1]

\tikzmath{
\s=1;
}

\begin{scope}[yshift=-1*\s cm] 

\tikzmath{ 
\r=0.8;
\x = sqrt(2*\r^2);
\y = sqrt(\r^2 - (\x/2)^2);
\yy = 2*\r + \y;
\l=2*\r*sin(67.5);
\L=sqrt(\r^2+\l^2-2*\r*\l*cos(135+22.5));
\A=acos((\L^2+\l^2-\r^2)/(2*\L*\l));
\tz=2;
\tzz=\tz-1;
\tzzz=\tzz-1;
}

\draw[]
	(90:\r) -- (90+22.5:\l)
	(90+22.5:\l) -- (90+22.5-\A:\L)
	(1*\x,0) -- (1*\x, \r)
;

\foreach \a in {0,...,\tz} {

\draw[xshift=\x*\a cm]
	(0:0) -- (90:\r)
	(90:\r) -- (90-22.5:\l)
	(90-22.5:\l) -- (90-22.5+\A:\L)
;
	
}

\foreach \a in {0,...,\tzz} {

\draw[xshift=\x*\a cm]
	(90-22.5:\l) -- (\x,\r)
;

}

\foreach \c in {-1,0,...,\tzz} {

\node at (1*\x+\c*\x, 2*\r + 0.75*\y) {\small $\gamma$}; 
\node at (1*\x+\c*\x, 1*\r + 0.5*\y) {\small $\gamma$}; 
\node at (0.65*\x+\c*\x, 1.1*\r + 1*\y) {\small $\beta$}; 
\node at (1.35*\x+\c*\x, 1.1*\r + 1*\y) {\small $\beta$}; 

}

\foreach \c in {0,...,\tzz} {

\node at (0.5*\x+\c*\x, 0.1*\r) {\small $\gamma$}; 
\node at (0.5*\x+\c*\x, 1*\r + 0.5*\y) {\small $\gamma$}; 
\node at (0.15*\x+\c*\x, 0.9*\r) {\small $\beta$}; 
\node at (0.85*\x+\c*\x, 0.9*\r) {\small $\beta$}; 

}

\draw[<-, gray]
	(-\y, \L+0.4*\r) -- (\tz*\y-0.25*\y, \L+0.4*\r)
;

\draw[->, gray]
	(\tz*\y+0.25*\y,1*\L+0.4*\r) -- (2*\tz*\y+\y,1*\L+0.4*\r)
;

\node at (\tz*\y, \L+0.4*\r) {$\ast$};

\draw[<-, gray]
	(0,-0.4*\r) -- (\tz*\y-0.25*\y,-0.4*\r)
;

\draw[->, gray]
	(\tz*\y+0.25*\y,-0.4*\r) -- (2*\tz*\y,-0.4*\r)
;

\node at (\tz*\y, -0.4*\r) {$\star$};

\node at (\tz*\y, -1.25*\r) {$\mathcal{T}$};

\end{scope} 

\begin{scope}[xshift=6.5*\s cm] 

\tikzmath{
\L=0.55;
\th=360/4;
\X=\L*cos(0.5*\th);
\l=2*\X;
}

\filldraw[gray!50]
	(0,0) circle (2.5*\l)
;

\filldraw[white]
	(0,0) circle (2.24*\l)
;

\fill[gray!50]
	(0.5*\l,0.5*\l) -- (0.5*\l,-0.5*\l) -- (-0.5*\l,-0.5*\l) -- (-0.5*\l,0.5*\l) -- cycle
;

\foreach \a in {0,...,3}
{
\begin{scope}[rotate=90*\a]

\draw
	(0.5*\l,-0.5*\l) -- (0.5*\l,0.5*\l) -- (1*\l,1*\l) -- (1*\l,2*\l) -- ++(-0.4*\l,-0.3*\l)
	(1*\l,1*\l) -- ++(0.3*\l,-0.3*\l);

\node at (0.4*\l,0.68*\l) {\small $\ast$};

\node at (0,1.4*\l) {\small $\mathcal{T}$};

\node[] at (55:2.05*\l) {\small $\star$};

\end{scope}

}	

\draw (0,0) circle (2.24*\l);

\end{scope}

\end{tikzpicture}
\caption{The earth map type tilings: $\ast = \gamma^c$ and $\star = \gamma^{c-1}$ for $c\ge2$}
\label{Fig-a4-families-earth-map-tilings}
\end{figure}
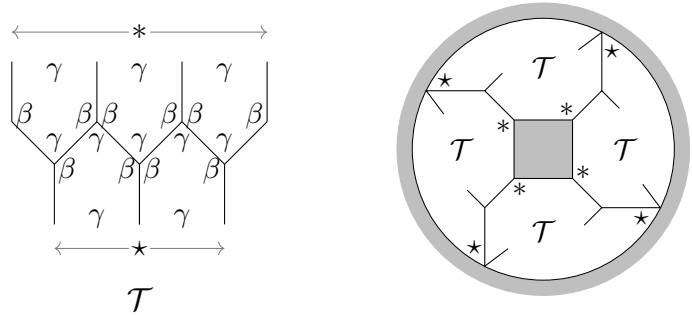

The family of earth map type are derived from the monohedral tilings $E_{\square}4$ in \cite{cly}. The first picture of Figure \ref{Fig-a4-families-earth-map-tilings} shows a part of $E_{\square}4$ with $\gamma^3$ at one end and $\gamma^2$ at the other. Let $\mathcal{T}$ denote the general version of it consisting of $2c-1$ rhombi with $\gamma^c$ at one end and $\gamma^{c-1}$ at the other (the first picture shows $c=3$). The earth map type tilings are constructed by four copies of $\mathcal{T}$ and $2$ squares. The four $\mathcal{T}$'s are glued together between the two squares as illustrated in the second picture.

The Platonic type tiling is the first picture of Figure \ref{Fig-a4-platonic-archimedean-tilings}. The three Archimedean type tilings are the second, third and fourth picture. The two sporadic tilings are illustrated in Figure \ref{Fig-a4-sporadic-tilings}. Among them, $\beta$ is marked by \quotes{$\diamond$}. 


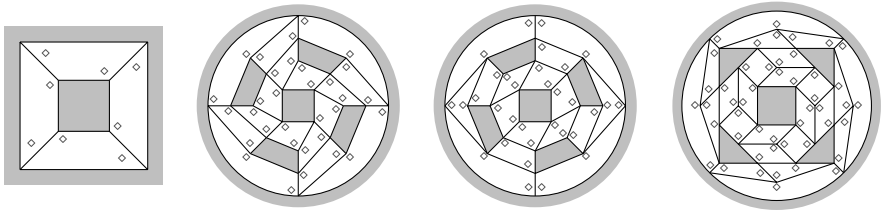
\begin{figure}[h!]
\centering
\begin{tikzpicture}[scale=0.6]

\tikzmath{
\s=1;
\ss=0.8;
\sss=0.75;
}

\begin{scope}[]

\begin{scope}[scale=\ss] 

\tikzmath{
\s=1;
\r=1;
\th=360/4;
\sh=1;
\ss=2.2;
}

\fill[gray!50, scale=1]
	(\ss*\sh,\ss*\sh) -- (\ss*\sh,-\ss*\sh) -- (-\ss*\sh,-\ss*\sh) -- (-\ss*\sh,\ss*\sh) -- cycle
;

\fill[white]
	(0.5*\th:2.5*\r) -- (1.5*\th:2.5*\r) -- (2.5*\th:2.5*\r)  -- (3.5*\th:2.5*\r) -- cycle
;

\fill[gray!50]
	(0.5*\th:\r) -- (1.5*\th:\r) -- (2.5*\th:\r)  -- (3.5*\th:\r) -- cycle
;

\foreach \a in {0,...,3} {

\draw[rotate=\th*\a]
	(0.5*\th:\r) -- (1.5*\th:\r)
	(0.5*\th:\r) -- (0.5*\th:2.5*\r)
	(0.5*\th:2.5*\r) -- (1.5*\th:2.5*\r)
;

\node at (0.5*\th+0.15*\th+\a*\th: 1.1*\r) {\scalebox{0.5}{$\diamond$}};
\node at (0.5*\th-0.1*\th+\a*\th: 1.8*\r) {\scalebox{0.5}{$\diamond$}};

}

\end{scope}

\end{scope}

\begin{scope}[xshift=4.75*\s cm] 

\tikzmath{
\s = 1;
\r= 0.5;
\th=360/4;
\ph=360/8;
\x=\r*cos(0.5*\th);
}

\fill[gray!50] (0,0) circle (4.5*\r);
	
\fill[white] (0,0) circle (4*\r);

\fill[gray!50]
	(0.5*\th:\r) -- (1.5*\th:\r) -- (2.5*\th:\r) -- (3.5*\th:\r) -- cycle
;

\foreach \a in {0,...,3} {

\fill[gray!50, rotate=\a*\th]
	(90:2*\r) -- (90:3*\r) -- (90-0.5*\th:3*\r) -- (90-0.5*\th:2*\r) -- cycle
;

}

\foreach \a in {0,...,3} {

\draw[rotate=\a*\th]
	(0.5*\th:\r) -- (1.5*\th:\r)
	(1.5*\th:\r) -- (90:2*\r)
	(0.5*\th:\r) -- (90-\ph:2*\r)
	(90:3*\r) -- (90:4*\r)
	(90+0.5*\th:3*\r) -- (90:4*\r)
;


}

\foreach \a in {1,3,...,7} {

\draw[rotate=\a*\ph]
	(90:2*\r) -- (90+\ph:2*\r)
	(90:3*\r) -- (90+\ph:3*\r)
;

}

\foreach \a in {0,2,...,6} {

\tikzset{rotate=\a*\ph}

\draw[]
	(90+0.5*\th:2*\r) -- (90:3*\r)
;

\draw[gray, densely dashed]
	%
;

\node at ($(90:2*\r) !1/4! (90-0.5*\th:\r)$) {\scalebox{0.5}{$\diamond$}}; 
\node at ($(90:2*\r) !4/5! (90-0.5*\th:\r)$) {\scalebox{0.5}{$\diamond$}}; 

\node at (90+0.1*\th:2*\r) {\scalebox{0.5}{$\diamond$}}; 
\node at (1.5*\th-0.1*\th:1.75*\r) {\scalebox{0.5}{$\diamond$}}; 

\node at (90+0.05*\th:3.1*\r) {\scalebox{0.5}{$\diamond$}}; 
\node at (1.5*\th-0.075*\th:2.725*\r) {\scalebox{0.5}{$\diamond$}}; 

\node at (0.5*\th:3.25*\r) {\scalebox{0.5}{$\diamond$}}; 
\node at (90-0.05*\th:3.75*\r) {\scalebox{0.5}{$\diamond$}}; 

}

\foreach \a in {0,...,7} {

\draw[rotate=\a*\ph]
	(90:2*\r) -- (90:3*\r)
;

}

\draw[] (0,0) circle (4*\r);

\end{scope}

\begin{scope}[xshift=10*\s cm]

\tikzmath{
\s = 1;
\r= 0.5;
\th=360/4;
\ph=360/8;
\x=\r*cos(0.5*\th);
}

\fill[gray!50] (0,0) circle (4.5*\r);
	
\fill[white] (0,0) circle (4*\r);

\fill[gray!50]
	(0.5*\th:\r) -- (1.5*\th:\r) -- (2.5*\th:\r) -- (3.5*\th:\r) -- cycle
;

\foreach \a in {0,...,3} {

\fill[gray!50, rotate=\a*\th]
	(90:2*\r) -- (90:3*\r) -- (90+0.5*\th:3*\r) -- (90+0.5*\th:2*\r) -- cycle
;

}

\foreach \a in {0,...,3} {

\draw[rotate=\a*\th]
	(0.5*\th:\r) -- (1.5*\th:\r)
	%
	%
;

}

\foreach \a in {0,...,7} {

\draw[rotate=\a*\ph]
	(90:2*\r) -- (90+\ph:2*\r)
	(90:2*\r) -- (90:3*\r)
	(90:3*\r) -- (90+\ph:3*\r)
;

}

\foreach \a in {0,1} {

\tikzset{rotate=\a*180}

\draw[]
	(0.5*\th:\r) -- (0.5*\th:2*\r)
	(1.5*\th:\r) -- (90:2*\r)
	(1.5*\th:\r) -- (180:2*\r)
	(90:3*\r) -- (90:4*\r)
	(0:4*\r) -- (0.5*\th:3*\r) 
	(0:4*\r) -- (-0.5*\th:3*\r) 
;

\draw[gray, densely dashed]
	%
	%
;



\node at ($(90:2*\r) !1/4! (90-0.5*\th:\r)$) {\scalebox{0.5}{$\diamond$}}; 
\node at ($(90:2*\r) !4/5! (90-0.5*\th:\r)$) {\scalebox{0.5}{$\diamond$}}; 

\node at (1.5*\th:1.25*\r) {\scalebox{0.5}{$\diamond$}}; 
\node at (1.5*\th:1.75*\r) {\scalebox{0.5}{$\diamond$}}; 

\node at (0.5*\th-0.15*\th:1.15*\r) {\scalebox{0.5}{$\diamond$}}; 
\node at (0.05*\th:1.65*\r) {\scalebox{0.5}{$\diamond$}}; 

\node at (0.5*\th+0.075*\th:2.15*\r) {\scalebox{0.5}{$\diamond$}}; 
\node at (90-0.075*\th:2.625*\r) {\scalebox{0.5}{$\diamond$}}; 

\node at (1.5*\th+0.075*\th:2.15*\r) {\scalebox{0.5}{$\diamond$}}; 
\node at (180-0.075*\th:2.65*\r) {\scalebox{0.5}{$\diamond$}}; 

\node at (0:3.2*\r) {\scalebox{0.5}{$\diamond$}}; 
\node at (0:3.7*\r) {\scalebox{0.5}{$\diamond$}}; 

\node at (90-0.05*\th:3.65*\r) {\scalebox{0.5}{$\diamond$}}; 
\node at (0.5*\th:3.2*\r) {\scalebox{0.5}{$\diamond$}}; 

\node at (90+0.05*\th:3.65*\r) {\scalebox{0.5}{$\diamond$}}; 
\node at (1.5*\th:3.2*\r) {\scalebox{0.5}{$\diamond$}}; 

}

\draw[] (0,0) circle (4*\r);

\end{scope} 

\begin{scope}[xshift=15.35*\s cm ]

\tikzmath{
\s = 1;
\r= 0.6;
\th=360/4;
\ph=360/8;
\x=\r*cos(0.5*\th);
}

\fill[gray!50] (0,0) circle (3.85*\r);

\fill[white] (0,0) circle (3.5*\r);

\fill[gray!50]
	(0.5*\th:\r) -- (1.5*\th:\r) -- (2.5*\th:\r) -- (3.5*\th:\r) -- cycle
;

\foreach \a in {0,1,2,3} {

\fill[gray!50, rotate=\a*\th]
	(0.5*\th:2*\r) -- (\x, 3*\x) -- (0.5*\th:3*\r) -- (3*\x, \x) -- cycle
;

}

\foreach \a in {0,...,3} {

\tikzset{rotate=\a*\th}

\draw[
]
	(1.5*\th:\r) -- (0,2*\x)
	(0,2*\x) -- (0.5*\th:2*\r)
	(0,2*\x) -- (-\x, 3*\x)
	(0,4*\x) -- (\x, 3*\x)
	(0,4*\x) -- (0.5*\th:3.5*\r) 
	(0,4*\x) -- (1.5*\th:3*\r) 
;

\draw[]
	(0.5*\th:\r) -- (1.5*\th:\r)
	(0.5*\th:\r) -- (0.5*\th:2*\r) 
	(0.5*\th:2*\r) -- (\x, 3*\x)
	(1.5*\th:2*\r) -- (-\x, 3*\x)
	(-\x, 3*\x) -- (\x, 3*\x)
	(\x, 3*\x) -- (0.5*\th:3*\r) 
	(-\x, 3*\x) -- (1.5*\th:3*\r) 
	(0.5*\th:3*\r) -- (0.5*\th:3.5*\r) 
;

\node at (0.15*\x,2.25*\x) {\scalebox{0.5}{$\diamond$}}; 
\node at (0.9*\x,2.7*\x) {\scalebox{0.5}{$\diamond$}}; 

\node at (0.15*\x,1.75*\x) {\scalebox{0.5}{$\diamond$}}; 
\node at (0.9*\x,1.25*\x) {\scalebox{0.5}{$\diamond$}}; 

\node at (-0.4*\x,2*\x) {\scalebox{0.5}{$\diamond$}}; 
\node at (-1.6*\x,2*\x) {\scalebox{0.5}{$\diamond$}}; 

\node at (0,3.65*\x) {\scalebox{0.5}{$\diamond$}}; 
\node at (-\x,3.25*\x) {\scalebox{0.5}{$\diamond$}}; 

\node at (0.85*\x,3.55*\x) {\scalebox{0.5}{$\diamond$}}; 
\node at (2.75*\x,3.25*\x) {\scalebox{0.5}{$\diamond$}}; 

\node at (0,4.25*\x) {\scalebox{0.5}{$\diamond$}}; 
\node at (-3.1*\x,3.45*\x) {\scalebox{0.5}{$\diamond$}}; 

}

\draw[] (0,0) circle (3.5*\r);

\end{scope}

\end{tikzpicture}
\caption{The four Platonic and Archimedean type tilings, $\diamond=\beta$}
\label{Fig-a4-platonic-archimedean-tilings}
\end{figure}


\begin{figure}[h!]
\centering
\begin{tikzpicture}[scale=1]

\tikzmath{
\s=1;
\r=0.35;
\th=360/4;
\x=\r*cos(0.5*\th);
\R = sqrt(\x^2+(3*\x)^2);
\aR = acos(3*\x/\R);
\RRR=sqrt( (3*\x)^2 + (4*\x)^2 );
\aRRR=acos(3*\x/\RRR);
}

\begin{scope}[] 

\fill[gray!50] (0,0) circle (3.5*\r);

\fill[white]
	(3*\x, 3*\x) -- (-3*\x, 3*\x) -- (-3*\x, -3*\x) -- (3*\x, -3*\x) -- cycle
;

\fill[gray!50]
	(0.5*\th:\r) -- (1.5*\th:\r) -- (2.5*\th:\r) -- (3.5*\th:\r) -- cycle
;

\foreach \a in {0,...,3} {

\tikzset{shift={(\th+\a*\th:2*\x)}}

\fill[gray!50]
	(0.5*\th:\r) -- (1.5*\th:\r) -- (2.5*\th:\r) -- (3.5*\th:\r) -- cycle
;

}

\foreach \a in {0,...,3}{

\draw[rotate=\th*\a]
	(0.5*\th:\r) -- (1.5*\th:\r)
	(\x, \x) -- (\x, 3*\x)
	(-\x, \x) -- (-\x, 3*\x)
	(-\x, 3*\x) -- (\x, 3*\x)
	(-\x, 3*\x) -- (-3*\x, 3*\x) 
	(\x, 3*\x) -- (3*\x, 3*\x) 
;

\node at (60+\th*\a:2*\r) {\scalebox{0.5}{$\diamond$}};
\node at (30+\th*\a:2*\r) {\scalebox{0.5}{$\diamond$}};

}

\draw (0,0) circle (3*\r);

\end{scope}

\begin{scope}[xshift=3.5*\s cm] 

\fill[gray!50]
	(5*\x, 3*\x) arc (90-\aRRR:90+\aRRR:1.25*\RRR) -- (-5*\x, 3*\x) -- (-5*\x, -3*\x) arc (270-\aRRR:270+\aRRR:1.25*\RRR) -- cycle
;

\fill[white]
	(4*\x,3*\x) -- (-4*\x,3*\x) -- (-4*\x,-3*\x) -- (4*\x,-3*\x)
; 

\foreach \aa in {0,1} {

\fill[gray!50, rotate=2*\aa*\th, shift={(\x,0)}]
	(0.5*\th:\r) -- (1.5*\th:\r) -- (2.5*\th:\r) -- (3.5*\th:\r) -- cycle
;

\fill[gray!50, rotate=2*\aa*\th, shift={(\x,-2*\x)}]
	(0.5*\th:\r) -- (1.5*\th:\r) -- (2.5*\th:\r) -- (3.5*\th:\r) -- cycle
;

\fill[white, rotate=2*\aa*\th, shift={(3*\x,-2*\x)}]
	(0.5*\th:\r) -- (1.5*\th:\r) -- (2.5*\th:\r) -- (3.5*\th:\r) -- cycle
;

\fill[gray!50, rotate=2*\aa*\th]
	(2*\x,\x) -- (2*\x,-\x) -- (4*\x,-\x) -- (4*\x,3*\x) -- cycle
;

}

\foreach \aa in {-1,1} {

\tikzset{shift={(\aa*\x,0)}}

\foreach \a in {0,...,3}{

\draw[rotate=\th*\a]
	(0.5*\th:\r) -- (1.5*\th:\r)
;

}

}

\foreach \aa in {0,1} {

\draw[rotate=2*\aa*\th]
	(0, \x) -- (0, 3*\x) 
	(0, 3*\x) -- (4*\x, 3*\x)
	(2*\x, \x) -- (4*\x, 3*\x)
	(2*\x, -\x) -- (4*\x, -\x)
	(4*\x, -\x) -- (4*\x, 3*\x)
	(0, -\x) -- (0, -3*\x) 
	(2*\x, -\x) -- (2*\x, -3*\x)
	(0, -3*\x) -- (2*\x, -3*\x)
	(2*\x, -3*\x) -- (4*\x, -3*\x)
	(4*\x, -\x) -- (4*\x, -3*\x)
	(4*\x, -3*\x) --  (5*\x, -3*\x)
	(4*\x, 3*\x) arc (90-\aRRR:90+\aRRR:\RRR)
;

{
\tikzset{rotate=\aa*180}

\node at (0.4*\x,2.5*\x) {\scalebox{0.5}{$\diamond$}};
\node at (1.9*\x,1.4*\x) {\scalebox{0.5}{$\diamond$}};

}

{
\foreach \a in {0,1} {

\tikzset{rotate=2*\aa*\th, shift={(3*\x,-2*\x)}}

\node at (0.5*\th+2*\th*\a: 0.6*\r) {\scalebox{0.5}{$\diamond$}}; 

}
}

}

\end{scope}

\end{tikzpicture}
\caption{The two sporadic tilings, $\diamond=\beta$}
\label{Fig-a4-sporadic-tilings}
\end{figure}
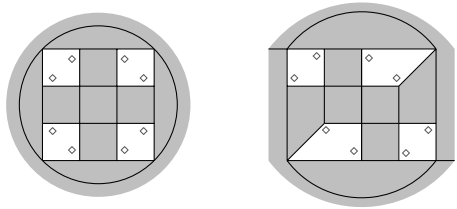

The first tiling in Figure \ref{Fig-a4-platonic-archimedean-tilings} is the (deformed) cube. The second and the third tilings are two triangular fusions of the snub cube. The fourth is a quadrilateral subdivision of the truncated octahedron.

The two triangular fusions of the snub cube are explained in Figure \ref{Fig-a4-Tilings-tri-fusion-snub-cube}.
The snub cube, a dihedral tiling of the sphere by regular triangles and squares, is given in the first picture. In general, if the triangles in a dihedral tiling can be grouped into adjacent pairs, then by fusing all adjacent pairs we get  quadrilaterals. In the snub cube, all triangles are regular. Therefore the fusion produces congruent rhombi. The dashed lines in Figure \ref{Fig-a4-Tilings-tri-fusion-snub-cube} represent the choices of adjacent pairs. The second, third and fourth picture are all possible ways to group the pairs.


\begin{figure}[h!]
\centering
\begin{tikzpicture}[scale=1]

\tikzmath{
\s = 1;
\r= 0.375;
\th=360/4;
\ph=360/8;
\x=\r*cos(0.5*\th);
}

\begin{scope}[] 

\fill[gray!50] (0,0) circle (4.3*\r);
	
\fill[white] (0,0) circle (4*\r);

\fill[gray!50]
	(0.5*\th:\r) -- (1.5*\th:\r) -- (2.5*\th:\r) -- (3.5*\th:\r) -- cycle
;

\foreach \a in {0,...,3} {

\fill[gray!50, rotate=\a*\th]
	(90:2*\r) -- (90:3*\r) -- (90-0.5*\th:3*\r) -- (90-0.5*\th:2*\r) -- cycle
;

}

\foreach \a in {0,...,3} {

\draw[rotate=\a*\th]
	(0.5*\th:\r) -- (1.5*\th:\r)
	(1.5*\th:\r) -- (90:2*\r)
	(0.5*\th:\r) -- (90-\ph:2*\r)
	(90:3*\r) -- (90:4*\r)
	(90+0.5*\th:3*\r) -- (90:4*\r)
;


}

\foreach \a in {1,3,...,7} {

\draw[rotate=\a*\ph]
	(90:2*\r) -- (90+\ph:2*\r)
	(90:3*\r) -- (90+\ph:3*\r)
;

}

\foreach \a in {0,2,...,6} {

\draw[rotate=\a*\ph]
	(90+0.5*\th:2*\r) -- (90:3*\r)
;

\draw[
rotate=\a*\ph]
	(90:2*\r) -- (90-0.5*\th:\r)
	(90:2*\r) -- (90+\ph:2*\r)
	(90:3*\r) -- (90+\ph:3*\r)
	(90-0.5*\th:3*\r) -- (90:4*\r)
;

}

\foreach \a in {0,...,7} {

\draw[rotate=\a*\ph]
	(90:2*\r) -- (90:3*\r)
;

}

\draw[] (0,0) circle (4*\r);

\end{scope} 

\begin{scope}[xshift=3.5*\s cm] 

\fill[gray!50] (0,0) circle (4.3*\r);
	
\fill[white] (0,0) circle (4*\r);

\fill[gray!50]
	(0.5*\th:\r) -- (1.5*\th:\r) -- (2.5*\th:\r) -- (3.5*\th:\r) -- cycle
;

\foreach \a in {0,...,3} {

\fill[gray!50, rotate=\a*\th]
	(90:2*\r) -- (90:3*\r) -- (90-0.5*\th:3*\r) -- (90-0.5*\th:2*\r) -- cycle
;

}

\foreach \a in {0,...,3} {

\draw[rotate=\a*\th]
	(0.5*\th:\r) -- (1.5*\th:\r)
	(1.5*\th:\r) -- (90:2*\r)
	(0.5*\th:\r) -- (90-\ph:2*\r)
	(90:3*\r) -- (90:4*\r)
	(90+0.5*\th:3*\r) -- (90:4*\r)
;

\node [shape = circle, fill = black, minimum size = 0.14 cm, inner sep=0pt] at (0.5*\th+\a*\th:\r) {};
\node [shape = circle, fill = black, minimum size = 0.14 cm, inner sep=0pt] at (90+\a*\th: 4*\r) {};

}

\foreach \a in {1,3,...,7} {

\draw[rotate=\a*\ph]
	(90:2*\r) -- (90+\ph:2*\r)
	(90:3*\r) -- (90+\ph:3*\r)
;

}

\foreach \a in {0,2,...,6} {

\draw[rotate=\a*\ph]
	(90+0.5*\th:2*\r) -- (90:3*\r)
;

\draw[gray, densely dashed, rotate=\a*\ph]
	(90:2*\r) -- (90-0.5*\th:\r)
	(90:2*\r) -- (90+\ph:2*\r)
	(90:3*\r) -- (90+\ph:3*\r)
	(90-0.5*\th:3*\r) -- (90:4*\r)
;

}

\foreach \a in {0,...,7} {

\draw[rotate=\a*\ph]
	(90:2*\r) -- (90:3*\r)
;

}

\draw[] (0,0) circle (4*\r);

\end{scope} 

\begin{scope}[xshift=7*\s cm] 

\fill[gray!50] (0,0) circle (4.3*\r);
	
\fill[white] (0,0) circle (4*\r);

\fill[gray!50]
	(0.5*\th:\r) -- (1.5*\th:\r) -- (2.5*\th:\r) -- (3.5*\th:\r) -- cycle
;

\foreach \a in {0,...,3} {

\fill[gray!50, rotate=\a*\th]
	(90:2*\r) -- (90:3*\r) -- (90+0.5*\th:3*\r) -- (90+0.5*\th:2*\r) -- cycle
;

}

\foreach \a in {0,...,3} {

\draw[rotate=\a*\th]
	(0.5*\th:\r) -- (1.5*\th:\r)
	%
	%
;

}

\foreach \a in {0,...,7} {

\draw[rotate=\a*\ph]
	(90:2*\r) -- (90+\ph:2*\r)
	(90:2*\r) -- (90:3*\r)
	(90:3*\r) -- (90+\ph:3*\r)
;

}

\foreach \a in {0,1} {

\draw[rotate=\a*180]
	(0.5*\th:\r) -- (0.5*\th:2*\r)
	(1.5*\th:\r) -- (90:2*\r)
	(1.5*\th:\r) -- (180:2*\r)
	(90:3*\r) -- (90:4*\r)
	(0:4*\r) -- (0.5*\th:3*\r) 
	(0:4*\r) -- (-0.5*\th:3*\r) 
;

\draw[gray, densely dashed, rotate=\a*180]
	(0.5*\th:\r) -- (0:2*\r)
	(0.5*\th:\r) -- (90:2*\r)
	(1.5*\th:\r) -- (1.5*\th:2*\r)
	(0.5*\th:2*\r) -- (90:3*\r)
	(1.5*\th:2*\r) -- (180:3*\r)
	(0:3*\r) -- (0:4*\r)
	(90:4*\r) -- (1.5*\th:3*\r) 
	(90:4*\r) -- (0.5*\th:3*\r) 
;

\node [shape = circle, fill = black, minimum size = 0.14 cm, inner sep=0pt] at (0.5*\th+\a*180:2*\r) {};
\node [shape = circle, fill = black, minimum size = 0.14 cm, inner sep=0pt] at (0+\a*180: 2*\r) {};

\node [shape = circle, fill = black, minimum size = 0.14 cm, inner sep=0pt] at (90+\a*180: 3*\r) {};
\node [shape = circle, fill = black, minimum size = 0.14 cm, inner sep=0pt] at (1.5*\th+\a*180: 3*\r) {};

}

\draw[] (0,0) circle (4*\r);

\end{scope}

\begin{scope}[xshift=10.5*\s cm] 

\fill[gray!50] (0,0) circle (4.3*\r);
	
\fill[white] (0,0) circle (4*\r);

\fill[gray!50]
	(0.5*\th:\r) -- (1.5*\th:\r) -- (2.5*\th:\r) -- (3.5*\th:\r) -- cycle
;

\foreach \a in {0,...,3} {

\fill[gray!50, rotate=\a*\th]
	(90:2*\r) -- (90:3*\r) -- (90+0.5*\th:3*\r) -- (90+0.5*\th:2*\r) -- cycle
;

}

\foreach \a in {0,...,3} {

\draw[rotate=\a*\th]
	(0.5*\th:\r) -- (1.5*\th:\r)
	%
	%
;

}

\foreach \a in {0,...,7} {

\draw[rotate=\a*\ph]
	(90:2*\r) -- (90+\ph:2*\r)
	(90:2*\r) -- (90:3*\r)
	(90:3*\r) -- (90+\ph:3*\r)
;

}

\foreach \a in {0,1} {

\draw[rotate=\a*180]
	(0.5*\th:\r) -- (0.5*\th:2*\r)
	(1.5*\th:\r) -- (90:2*\r)
	(1.5*\th:\r) -- (180:2*\r)
	%
	(0:3*\r) -- (0:4*\r)
	(90:4*\r) -- (1.5*\th:3*\r) 
	(90:4*\r) -- (0.5*\th:3*\r) 
;

\draw[gray, densely dashed, rotate=\a*180]
	(0.5*\th:\r) -- (0:2*\r)
	(0.5*\th:\r) -- (90:2*\r)
	(1.5*\th:\r) -- (1.5*\th:2*\r)
	(0.5*\th:2*\r) -- (90:3*\r)
	(1.5*\th:2*\r) -- (180:3*\r)
	%
	(90:3*\r) -- (90:4*\r)
	(0:4*\r) -- (0.5*\th:3*\r) 
	(0:4*\r) -- (-0.5*\th:3*\r) 
;

\node [shape = circle, fill = black, minimum size = 0.14 cm, inner sep=0pt] at (0.5*\th+\a*180:2*\r) {};
\node [shape = circle, fill = black, minimum size = 0.14 cm, inner sep=0pt] at (0.5*\th+\a*180:3*\r) {};

\node [shape = circle, fill = black, minimum size = 0.14 cm, inner sep=0pt] at (0+\a*180: 2*\r) {};
\node [shape = circle, fill = black, minimum size = 0.14 cm, inner sep=0pt] at (0+\a*180: 3*\r) {};

}

\draw[] (0,0) circle (4*\r);

\end{scope}

\end{tikzpicture}
\caption{Triangular fusions of the snub cube}
\label{Fig-a4-Tilings-tri-fusion-snub-cube}
\end{figure}
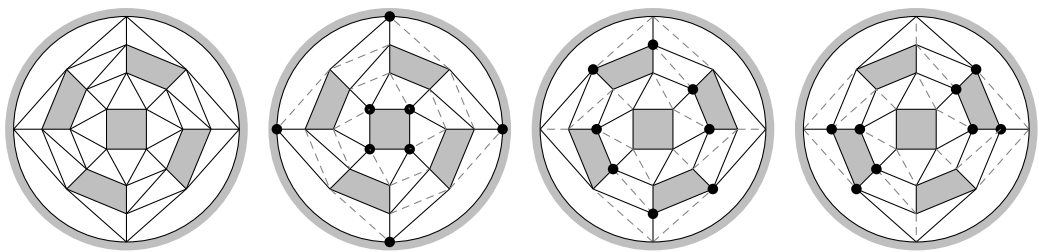 

To distinguish the last three tilings in Figure \ref{Fig-a4-Tilings-tri-fusion-snub-cube}, we highlight with $\bullet$'s the vertices at the squares with exterior edge configuration \quotes{dashed-solid-solid}. The distribution of $\bullet$'s shows that the the second and the third tiling in Figure \ref{Fig-a4-platonic-archimedean-tilings} are different. Guided by the distribution of $\bullet$'s, the second and the fourth tiling are equivalent. Hence there are actually two tilings up to equivalence.

The two tilings are related in the following way. In the third tiling, by cutting along the octagonal boundary (highlighted in Figure \ref{Fig-a4-tri-fusion-modification}) of the ring of six rhombi surrounding the exterior square, we get the inner part and the outer part. Rotating the outer part, we obtain the tiling in the fourth picture with two squares having four $\bullet$'s. We will see in Proposition \ref{Prop-albe2} that a square with four such vertices uniquely determine the tiling in the second picture. This further shows how one can derive one tiling from another through a specific cut-and-rotate operation (or modification \cite{cl2, cly, ly}).

\begin{figure}[h!]
\centering
\pgfmathsetmacro{\r}{2} %
\tdplotsetmaincoords{75}{110}
\begin{tikzpicture}[]

\tikzmath{
\s = 1;
}

\begin{scope}[tdplot_main_coords,
font=\footnotesize,
Sphericalcap/.style={gray!70!black,
}
]

\tikzmath{
\r=1.6;
\R=0.35*\r; 
\v=sqrt(3);
}

\pgfmathsetmacro{\h}{\R} %
\coordinate[] (M) at (0,0,0); 
\coordinate[] (N) at (0,0,\h); 
\coordinate[] (S) at (0,0,-\h); 

\node at (0,0,-\h) {\AxisRotator[rotate=-90, black!60]};

\draw[Sphericalcap, teal!80!blue, line width=1] (N) circle[radius=sqrt(\r*\r-\h*\h)];
\draw[Sphericalcap, red!70!white, line width=1] (S) circle[radius=sqrt(\r*\r-\h*\h)];

\begin{scope}[tdplot_screen_coords, on background layer]
\fill[ball color= gray!5, opacity = 0.1] (M) circle (\r); 
\end{scope}

\end{scope}

\begin{scope}[xshift=4*\s cm] 

\tikzmath{
\r= 0.375;
\th=360/4;
\ph=360/8;
\x=\r*cos(0.5*\th);
}

\fill[gray!50] (0,0) circle (4.3*\r);
	
\fill[white] (0,0) circle (4*\r);

\fill[gray!50]
	(0.5*\th:\r) -- (1.5*\th:\r) -- (2.5*\th:\r) -- (3.5*\th:\r) -- cycle
;

\foreach \a in {0,...,3} {

\fill[gray!50, rotate=\a*\th]
	(90:2*\r) -- (90:3*\r) -- (90+0.5*\th:3*\r) -- (90+0.5*\th:2*\r) -- cycle
;

}

\foreach \a in {0,...,3} {

\draw[rotate=\a*\th]
	(0.5*\th:\r) -- (1.5*\th:\r)
	%
	%
;

}

\foreach \a in {0,...,7} {

\draw[rotate=\a*\ph]
	(90:2*\r) -- (90:3*\r)
;

}

\foreach \a in {0,1} {

\draw[rotate=\a*180]
	(0.5*\th:\r) -- (0.5*\th:2*\r)
	(1.5*\th:\r) -- (90:2*\r)
	(1.5*\th:\r) -- (180:2*\r)
	(90:3*\r) -- (90:4*\r)
	(0:4*\r) -- (0.5*\th:3*\r) 
	(0:4*\r) -- (-0.5*\th:3*\r) 
;

\draw[gray, densely dashed, rotate=\a*180]
	(0.5*\th:\r) -- (0:2*\r)
	(0.5*\th:\r) -- (90:2*\r)
	(1.5*\th:\r) -- (1.5*\th:2*\r)
	(0.5*\th:2*\r) -- (90:3*\r)
	(1.5*\th:2*\r) -- (180:3*\r)
	(0:3*\r) -- (0:4*\r)
	(90:4*\r) -- (1.5*\th:3*\r) 
	(90:4*\r) -- (0.5*\th:3*\r) 
;

}

\foreach \a in {0,...,7} {

\draw[teal!80!blue, line width=1, rotate=\a*\ph]
	(90:2*\r) -- (90+\ph:2*\r)
;

\draw[red!70!white, line width=1, rotate=\a*\ph]
	(90:3*\r) -- (90+\ph:3*\r)
;

}

\foreach \a in {0,1} {

\node [shape = circle, fill = black, minimum size = 0.14 cm, inner sep=0pt] at (0.5*\th+\a*180:2*\r) {};
\node [shape = circle, fill = black, minimum size = 0.14 cm, inner sep=0pt] at (0+\a*180: 2*\r) {};

\node [shape = circle, fill = black, minimum size = 0.14 cm, inner sep=0pt] at (90+\a*180: 3*\r) {};
\node [shape = circle, fill = black, minimum size = 0.14 cm, inner sep=0pt] at (1.5*\th+\a*180: 3*\r) {};

}

\draw[] (0,0) circle (4*\r);

\end{scope}

\begin{scope}[xshift=8*\s cm] 

\tikzmath{
\r= 0.375;
\th=360/4;
\ph=360/8;
\x=\r*cos(0.5*\th);
}

\fill[gray!50] (0,0) circle (4.3*\r);
	
\fill[white] (0,0) circle (4*\r);

\fill[gray!50]
	(0.5*\th:\r) -- (1.5*\th:\r) -- (2.5*\th:\r) -- (3.5*\th:\r) -- cycle
;

\foreach \a in {0,...,3} {

\fill[gray!50, rotate=\a*\th]
	(90:2*\r) -- (90:3*\r) -- (90+0.5*\th:3*\r) -- (90+0.5*\th:2*\r) -- cycle
;

}

\foreach \a in {0,...,3} {

\draw[rotate=\a*\th]
	(0.5*\th:\r) -- (1.5*\th:\r)
	%
	%
;

}

\foreach \a in {0,...,7} {

\draw[rotate=\a*\ph]
	(90:2*\r) -- (90:3*\r)
;

}

\foreach \a in {0,1} {

\draw[rotate=\a*180]
	(0.5*\th:\r) -- (0.5*\th:2*\r)
	(1.5*\th:\r) -- (90:2*\r)
	(1.5*\th:\r) -- (180:2*\r)
	%
	(0:3*\r) -- (0:4*\r)
	(90:4*\r) -- (1.5*\th:3*\r) 
	(90:4*\r) -- (0.5*\th:3*\r) 
;

\draw[gray, densely dashed, rotate=\a*180]
	(0.5*\th:\r) -- (0:2*\r)
	(0.5*\th:\r) -- (90:2*\r)
	(1.5*\th:\r) -- (1.5*\th:2*\r)
	(0.5*\th:2*\r) -- (90:3*\r)
	(1.5*\th:2*\r) -- (180:3*\r)
	%
	(90:3*\r) -- (90:4*\r)
	(0:4*\r) -- (0.5*\th:3*\r) 
	(0:4*\r) -- (-0.5*\th:3*\r) 
;

}

\foreach \a in {0,...,7} {

\draw[rotate=\a*\ph]
	(90:2*\r) -- (90:3*\r)
;

}

\foreach \a in {0,...,7} {

\draw[teal!80!blue, line width=1, rotate=\a*\ph]
	(90:2*\r) -- (90+\ph:2*\r)
;

\draw[red!70!white, line width=1, rotate=\a*\ph]
	(90:3*\r) -- (90+\ph:3*\r)
;

}

\foreach \a in {0,1} {

\node [shape = circle, fill = black, minimum size = 0.14 cm, inner sep=0pt] at (0.5*\th+\a*180:2*\r) {};
\node [shape = circle, fill = black, minimum size = 0.14 cm, inner sep=0pt] at (0.5*\th+\a*180:3*\r) {};

\node [shape = circle, fill = black, minimum size = 0.14 cm, inner sep=0pt] at (0+\a*180: 2*\r) {};
\node [shape = circle, fill = black, minimum size = 0.14 cm, inner sep=0pt] at (0+\a*180: 3*\r) {};

}

\draw[] (0,0) circle (4*\r);

\end{scope}
\end{tikzpicture}
\caption{The modification of the two triangular fusions}
\label{Fig-a4-tri-fusion-modification}
\end{figure}
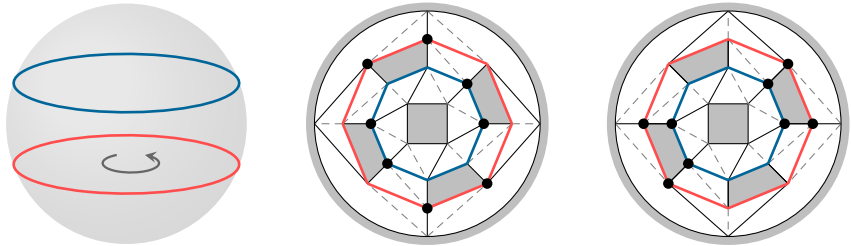

The fourth tiling of Figure \ref{Fig-a4-platonic-archimedean-tilings} results from a quadrilateral subdivision of the hexagons in the truncated octahedron. The subdivision is illustrated in Figure \ref{Fig-a4-Tilings-quad-subdiv-trunc-octahedron}, where each hexagon is subdivided into three rhombi and the corresponding degree $3$ vertex at \quotes{its centre} is $\beta^3$.

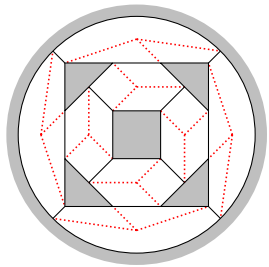
\begin{figure}[h!]
\centering
\begin{tikzpicture}[scale=1]

\tikzmath{
\s = 1;
\r= 0.45;
\th=360/4;
\ph=360/8;
\x=\r*cos(0.5*\th);
}

\fill[gray!50] (0,0) circle (3.85*\r);

\fill[white] (0,0) circle (3.5*\r);

\fill[gray!50]
	(0.5*\th:\r) -- (1.5*\th:\r) -- (2.5*\th:\r) -- (3.5*\th:\r) -- cycle
;

\foreach \a in {0,1,2,3} {

\fill[gray!50, rotate=\a*\th]
	(0.5*\th:2*\r) -- (\x, 3*\x) -- (0.5*\th:3*\r) -- (3*\x, \x) -- cycle
;

}

\foreach \a in {0,...,3} {

\draw[red, densely dotted, line width=0.6, rotate=\a*\th]
	(1.5*\th:\r) -- (0,2*\x)
	(0,2*\x) -- (0.5*\th:2*\r)
	(0,2*\x) -- (-\x, 3*\x)
	(0,4*\x) -- (\x, 3*\x)
	(0,4*\x) -- (0.5*\th:3.5*\r) 
	(0,4*\x) -- (1.5*\th:3*\r) 
;

\draw[rotate=\a*\th]
	(0.5*\th:\r) -- (1.5*\th:\r)
	(0.5*\th:\r) -- (0.5*\th:2*\r) 
	(0.5*\th:2*\r) -- (\x, 3*\x)
	(1.5*\th:2*\r) -- (-\x, 3*\x)
	(-\x, 3*\x) -- (\x, 3*\x)
	(\x, 3*\x) -- (0.5*\th:3*\r) 
	(-\x, 3*\x) -- (1.5*\th:3*\r) 
	(0.5*\th:3*\r) -- (0.5*\th:3.5*\r) 
;

}

\draw[] (0,0) circle (3.5*\r);

\end{tikzpicture}
\caption{The quadrilateral subdivision of the truncated octahedron} 
\label{Fig-a4-Tilings-quad-subdiv-trunc-octahedron}
\end{figure}

This paper is organised as follows. In Section \ref{Sec-basic}, we explain the basic terminology and tools. In Section \ref{Sec-tilings}, we classify the tilings. The key to classification is to find all the vertices in the tilings. The sporadic tilings are obtained in Propositions \ref{Prop-albega}, \ref{Prop-be3}, \ref{Prop-albe2}, \ref{Prop-al2be}. The infinite family is obtained in Proposition \ref{Prop-be2ga}.

\section{Basics} \label{Sec-basic}

We first list the basic combinatorial facts of tilings of the sphere by quadrilaterals from \cite{cly},
\begin{align}
\label{Eq-quad-v3}
v_3 &= 8 + \sum_{h\ge4} (h - 4) v_h, \\
\label{Eq-quad-f}
f &= 6 + \sum_{h\ge4} (h-3)v_h,
\end{align}
where $f$ denotes the total number of tiles and $v_i$ denotes the number of degree $i$ vertices and $i \ge 3$. By \eqref{Eq-quad-v3}, there is a degree $3$ vertex. 

The {\em vertex angle sum} of a vertex $\alpha^a\beta^b\gamma^c$, consisting of $a$ copies of $\alpha$ and $b$ copies of $\beta$ and $c$ copies of $\gamma$, is
\begin{align}\label{Eq-vertex-angle-sum}
a \alpha + b \beta + c \gamma = 2\pi.
\end{align} 
In a vertex notation, $a,b,c$ are assumed to be $>0$ unless otherwise specified. That is, we express only the angles appearing at a vertex as a convention. For example, $\alpha\beta^2$ is a vertex with $a=1, b=2$ and $c=0$. The notation $\alpha\beta^2\cdots$ means a vertex with at least one $\alpha$ and two $\beta$'s, i.e., $a\ge1$ and $b\ge2$. The angle combination in $\cdots$ is called the {\em remainder} of the vertex. The value of the remainder is denoted by $R$. For example, $R(\alpha\beta^2) = 2\pi - \alpha - 2\beta$.

There are various constraints on the angle combinations at vertices in a tiling. Examples of such constraints are the vertex angle sum and the quadrilateral angle sum. A collection of all vertices in a tiling satisfying various constraints is called an {\em anglewise vertex combination} ($\AVC$). The following $\AVC$ is from \eqref{Eq-AVC-be2ga-begac-albegac},
\begin{align*}
\AVC = \{ \beta^2\gamma, \beta\gamma^c, \alpha\beta\gamma^c \}.
\end{align*}
The generic $c$ may take different values at different vertex. The tilings constructed from \eqref{Eq-AVC-be2ga-begac-albegac} do not have $\beta\gamma^c$. We use \quotes{$\equiv$} in place of \quotes{$=$} to denote the set of all vertices which actually appear in a tiling. For example, we have \eqref{Eq-AVC-be2ga-albegac} for the tilings in Figure \ref{Fig-a4-Tilings-be2ga-albega2}
\begin{align*}
\AVC \equiv \{ \beta^2\gamma, \alpha\beta\gamma^c \}.
\end{align*}

To obtain the vertices, it is convenient to have notations for studying various angle arrangements. For example, $\alpha_1\gamma_2\cdots$ denotes the vertex where $T_1$ contributes $\alpha$ and $T_2$ contributes $\gamma$ in the first picture of Figure \ref{Fig-a4-adj-square-rhombus}. To emphasize $\alpha_1$ is adjacent to $\gamma_2$ along an edge \quotes{ $\vert$ }, we use $\alpha_1 \vert \gamma_2 \cdots$ to denote the vertex. In addition, the same picture shows that $\alpha\vert\gamma\cdots$ is a vertex if and only if $\alpha\vert\beta\cdots$ is a vertex. Similarly, $T_1, T_2$ in the second picture of Figure \ref{Fig-a4-adj-square-rhombus} show that $\gamma\vert\gamma\cdots$ is a vertex if and only if $\beta\vert\beta\cdots$ is also a vertex. For a full vertex, such as $\alpha^3$ in the third picture, we use $\vert \alpha \vert \alpha \vert \alpha \vert$ to denote its angle arrangement.



\begin{figure}[h!] 
\centering
\begin{tikzpicture}

\tikzmath{
\s=1;
\r=0.8;
\th=360/4;
\x=\r*cos(0.5*\th);
\R=sqrt(\x^2+(3*\x)^2);
\aR=acos(3*\x/\R);
}

\begin{scope}[] 

\foreach \aa in {0,1} {

\begin{scope}[xshift=2*\aa*\x cm]

\foreach \a in {0,...,3} {

\draw[rotate=\th*\a]
	(0.5*\th:\r) -- (1.5*\th:\r)
;

}

\end{scope}

}

\foreach \a in {0,...,3} {

\node at (0.5*\th+\th*\a: 0.65*\r) {\small $\alpha$};

}

\foreach \aa in {1} {

\begin{scope}[xshift=2*\aa*\x cm, xscale=\aa]

\foreach \a in {0,2} {

\node at (1.5*\th+\th*\a: 0.625*\r) {\small $\beta$}; 
\node at (0.5*\th+\th*\a: 0.65*\r) {\small $\gamma$}; 

}

\end{scope}

}

\node[inner sep=1,draw,shape=circle] at (0:0) {\small $1$};
\node[inner sep=1,draw,shape=circle] at (2*\x,0) {\small $2$};

\end{scope} 

\begin{scope}[xshift=4*\s cm] 

\foreach \aa in {-1,1} {

\tikzset{shift={(\aa*\x,0)}, xscale=\aa}

\foreach \a in {0,...,3} {

\draw[rotate=\th*\a]
	(0.5*\th:\r) -- (1.5*\th:\r)
;

}

\foreach \a in {0,2} {

\node at (1.5*\th+\th*\a: 0.625*\r) {\small $\beta$}; 
\node at (0.5*\th+\th*\a: 0.65*\r) {\small $\gamma$}; 

}

}

\node[inner sep=1,draw,shape=circle] at (-\x,0) {\small $1$};
\node[inner sep=1,draw,shape=circle] at (\x,0) {\small $2$};

\end{scope} 

\begin{scope}[xshift=7*\s cm, yshift=-0.1*\s cm] 

\foreach \a in {0,1,2} {

\draw[rotate=120*\a]
	(0:0) -- (90:\r) 
;

\node at (270+\a*120:0.3*\r) {\small $\alpha$};

}

\end{scope} 

\end{tikzpicture}
\caption{The arrangements of $\alpha \vert \gamma$ and $\gamma\vert\gamma$ and $\beta\vert\beta$ and $\alpha^3$}
\label{Fig-a4-adj-square-rhombus}
\end{figure}
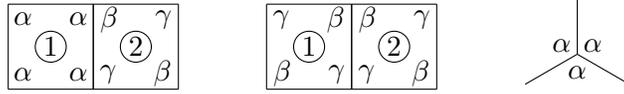


The tiles are the square and the rhombus in Figure \ref{Fig-quad-a4-angles}. Up to symmetry, we may assume $\beta > \gamma$ in the rhombus. The assumption is implicit throughout this paper.

The angle sum of the four angles of a spherical quadrilateral is $>2\pi$. For the square we have $4\alpha>2\pi$ and for the rhombus we have $2\beta+2\gamma>2\pi$. Then we get
\begin{align*}
\alpha > \tfrac{1}{2}\pi, \quad
\beta+\gamma > \pi.
\end{align*}

\begin{lem}\label{Lem-albe-alga} In a dihedral tiling by the square and the rhombus, $\alpha\beta\cdots, \alpha\gamma\cdots$ are vertices.
\end{lem}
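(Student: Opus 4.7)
The plan is to exploit the fact that the tiling is dihedral together with the alternating angle structure $\beta\gamma\beta\gamma$ of the rhombus. I expect both conclusions to follow simultaneously from examining a single edge shared by a square and a rhombus.

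First I would argue that such a shared edge must exist. Since the tiling is dihedral, both the square and the rhombus appear at least once. Moreover, all edges have the common length $x$, so any edge of a square may be matched edge-to-edge with an edge of either another square or a rhombus. If no square ever shared an edge with a rhombus, then the union of the (closed) squares and the union of the (closed) rhombi would meet only at isolated vertices; the complement of the finite vertex set in each would be an open subset of the sphere, and these two open subsets would partition $S^2$ minus a finite set, contradicting connectedness (equivalently: starting at any square and crossing edges, one can reach any other tile, and some crossing along the way must pass from a square to a rhombus).

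Next, fix such a square-rhombus edge $e$, with endpoints $v$ and $w$. At each of $v$ and $w$, the square contributes the angle $\alpha$ (since every corner of the square is $\alpha$). On the other side of $e$, the rhombus contributes its two angles at the endpoints of that edge. Because the angles of the rhombus read $\beta,\gamma,\beta,\gamma$ cyclically, the two endpoints of any edge of the rhombus carry different angles: one endpoint contributes $\beta$ and the other contributes $\gamma$. Therefore one of $v,w$ is a vertex of the form $\alpha\beta\cdots$ and the other is of the form $\alpha\gamma\cdots$, which is exactly the claim.

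I do not anticipate any serious obstacle here; the only subtlety is the connectedness argument guaranteeing at least one square-rhombus edge, which is elementary. Once that edge is found, the alternating pattern of rhombus angles forces both vertex types at its two endpoints at once, yielding $\alpha\beta\cdots$ and $\alpha\gamma\cdots$ simultaneously.
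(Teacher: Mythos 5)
Your proposal is correct and is essentially the paper's own argument: the paper simply points to the picture of a square and a rhombus sharing an edge, where the square contributes $\alpha$ at both endpoints and the rhombus contributes $\beta$ at one endpoint and $\gamma$ at the other, yielding $\alpha\beta\cdots$ and $\alpha\gamma\cdots$ simultaneously. Your explicit connectedness argument for the existence of a square--rhombus edge is a detail the paper leaves implicit, but it is the right justification and does not change the approach.
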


\begin{proof} The assertion is immediate from the first picture of Figure \ref{Fig-a4-adj-square-rhombus}.
\end{proof}

An immediate consequence of Lemma \ref{Lem-albe-alga} is that one of $\alpha^a\beta^b, \alpha\beta\gamma\cdots$ is a vertex. Then one of the inequalities $2\alpha + \beta \le 2\pi$ and $\alpha+2\beta \le 2\pi$ and $\alpha+\beta+\gamma\le2\pi$ holds. Combined with $\alpha>\frac{1}{2}\pi$ and $\beta>\gamma$ and $\beta+\gamma>\pi$, we know $\alpha, \gamma<\pi$.

\begin{lem}\label{Lem-al-be-ga} For the square and the rhombus with the same edges, we have 
\begin{align}\label{Eq-tan-al-be-ga}
\tan^2 \tfrac{1}{2} \alpha = \tan \tfrac{1}{2}\beta \tan \tfrac{1}{2} \gamma.
\end{align}
For $\beta>\gamma$, we have 
\begin{align}\label{Eq-Angle-Ineq}
\pi > \beta > \alpha > \gamma.
\end{align}
\end{lem}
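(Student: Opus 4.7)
Plan: I will derive both \eqref{Eq-tan-al-be-ga} and \eqref{Eq-Angle-Ineq} from a single trigonometric identity: for any spherical rhombus with edge $x$ and angle pairs $(\beta,\beta)$, $(\gamma,\gamma)$,
\[
\tan\tfrac{1}{2}\beta\,\tan\tfrac{1}{2}\gamma=\sec x.
\]
The square is the special case $\beta=\gamma=\alpha$, so the same formula applied to the square (with the same edge $x$) gives $\tan^2\tfrac{1}{2}\alpha=\sec x$. Equating the two expressions yields \eqref{Eq-tan-al-be-ga} immediately.

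To establish the auxiliary identity, I exploit the bilateral symmetry of a spherical rhombus: its two diagonals meet perpendicularly at the centre $O$ and bisect each of the four angles. This cuts the rhombus into four congruent right spherical triangles whose common right angle is at $O$, whose hypotenuse is an edge of length $x$, and whose two acute angles are $\tfrac{1}{2}\beta$ (at a $\beta$-vertex) and $\tfrac{1}{2}\gamma$ (at a $\gamma$-vertex). Applying the standard right-triangle identities $\cos A=\cos a\sin B$ (with opposite-side labelling) to the two acute angles expresses the cosines of the two legs $p$, $q$ solely in terms of $\beta$ and $\gamma$; plugging these into $\cos x=\cos p\cos q$ gives $\cos x=\cot\tfrac{1}{2}\beta\,\cot\tfrac{1}{2}\gamma$, which rearranges to the claimed identity.

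For \eqref{Eq-Angle-Ineq}, the remaining work is short. The excerpt has already established $\alpha,\gamma\in(0,\pi)$, so $\tan\tfrac{1}{2}\alpha$ and $\tan\tfrac{1}{2}\gamma$ are positive. The identity then forces $\tan\tfrac{1}{2}\beta>0$, hence $\tfrac{1}{2}\beta<\tfrac{1}{2}\pi$ and $\beta<\pi$. With all three half-angles in $(0,\tfrac{1}{2}\pi)$, where $\tan$ is strictly positive and increasing, $\tan\tfrac{1}{2}\alpha$ is the geometric mean of the two distinct positives $\tan\tfrac{1}{2}\beta>\tan\tfrac{1}{2}\gamma$ and so lies strictly between them. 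Monotonicity of $\tan$ on $(0,\tfrac{1}{2}\pi)$ gives $\gamma<\alpha<\beta$, and combining with $\beta<\pi$ yields \eqref{Eq-Angle-Ineq}.

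I do not anticipate a serious obstacle. The only place requiring care is labelling opposite sides and angles correctly in Napier's rules for the quarter-rhombus; once that setup is fixed the identity drops out of two substitutions, and the inequality is a one-line geometric-mean argument.
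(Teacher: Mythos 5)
Your proof is correct and takes essentially the same route as the paper: both arguments reduce \eqref{Eq-tan-al-be-ga} to the fact that each side equals $\tfrac{1}{\cos x}$, and then establish \eqref{Eq-Angle-Ineq} by the identical positivity argument (forcing $\tan\tfrac{1}{2}\beta>0$, hence $\beta<\pi$) followed by the geometric-mean/monotonicity-of-$\tan$ step. The only difference is that you derive the identity $\tan\tfrac{1}{2}\beta\tan\tfrac{1}{2}\gamma=\sec x$ yourself via the perpendicular diagonals and Napier's rules on the quarter-rhombus, whereas the paper simply cites it from Todhunter \cite[Art. 62]{to} or \cite[Lemma 18]{cly}.
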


We remark that $\cos x = \cot^2 \tfrac{1}{2} \alpha = \cot \tfrac{1}{2}\beta \cot \tfrac{1}{2} \gamma$.

\begin{proof} By \cite[Art. 62]{to} or \cite[Lemma 18]{cly}, both sides of \eqref{Eq-tan-al-be-ga} is $\tfrac{1}{\cos x}$. By $\alpha,\gamma< \pi$, it implies $\tan \frac{1}{2} \beta> 0$. Then we have $\beta <\pi$. By $\beta>\gamma$ and $\tan t$ is strictly increasing on $(0,\frac{1}{2}\pi)$, it further implies $\pi > \beta > \alpha > \gamma$.
\end{proof}

From this point onward, we may assume $\beta > \alpha > \gamma$. Then by Lemma \ref{Lem-albe-alga}, the vertex $\alpha\beta\cdots$ implies 
\begin{align} \label{Eq-sum-al-be-ga}
\alpha + \beta + \gamma \le 2\pi.
\end{align}

The following lemma is an adaptation of \cite[Lemma 4]{cly}.

\begin{lem}[Counting Lemma] \label{Lem-Counting} In a dihedral tiling of the sphere by squares and rhombi, if at every vertex the number of $\beta$ is no more than the number of $\gamma$, then at every vertex these two numbers are equal.
\end{lem}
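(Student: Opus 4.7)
The plan is a double counting argument based on the fact that each rhombus contributes exactly the same number of $\beta$'s as $\gamma$'s (namely two of each), while squares contribute neither. First I would let $r$ denote the total number of rhombi and, for each vertex $v$, let $b_v$ and $c_v$ denote the number of $\beta$'s and $\gamma$'s appearing at $v$, respectively. Summing over all vertices counts each angle of each rhombus exactly once, so
\begin{equation*}
\sum_{v} b_v \;=\; 2r \;=\; \sum_{v} c_v .
\end{equation*}

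Subtracting these identities gives $\sum_v (c_v - b_v) = 0$. Under the hypothesis $b_v \le c_v$ at every vertex, each summand $c_v - b_v$ is a nonnegative integer. A sum of nonnegative terms that vanishes forces every term to vanish, so $b_v = c_v$ at each vertex, which is the conclusion.

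There is no real obstacle here: the argument is essentially a pigeonhole/double count, and the only thing one needs to verify carefully is the global identity $\sum_v b_v = \sum_v c_v = 2r$, which follows directly from the fact that each edge-to-edge incidence of a rhombus angle at a vertex is counted exactly once on each side. No use of the angle-sum identity or the specific geometric inequalities from Lemma \ref{Lem-al-be-ga} is required; the lemma is purely combinatorial.
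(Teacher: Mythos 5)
Your proof is correct and is essentially the standard argument behind this lemma: the paper itself gives no proof, deferring to \cite[Lemma 4]{cly}, which rests on exactly the double count you describe (each rhombus carries two $\beta$'s and two $\gamma$'s, squares carry neither, so $\sum_v b_v=\sum_v c_v=2r$ and a vanishing sum of nonnegative terms forces termwise equality). Nothing further is needed.
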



\section{Tilings} \label{Sec-tilings}

We already know that there is a degree $3$ vertex. By \eqref{Eq-Angle-Ineq}, \eqref{Eq-sum-al-be-ga}, and $\alpha\beta\cdots$ being a vertex, we know that $\gamma^3,  \alpha^2\gamma,  \alpha\gamma^2, \beta\gamma^2$ are not vertices. Hence one of the following degree $3$ vertices must appear in a dihedral tiling,
\begin{align*}
\alpha^3, \beta^3, \alpha^2\beta, \alpha\beta^2, \beta^2\gamma, \alpha\beta\gamma.
\end{align*}
We study these vertices in the following propositions. 

\begin{prop}\label{Prop-al3} There is no dihedral tiling with vertex $\alpha^3$.
\end{prop}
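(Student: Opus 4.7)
The plan is to suppose $\alpha^3$ is a vertex (so $\alpha = \tfrac{2\pi}{3}$), reduce to the case where $\alpha\beta\gamma$ is a vertex by means of the Counting Lemma, and then contradict the identity of Lemma \ref{Lem-al-be-ga}. Under $\alpha = \tfrac{2\pi}{3}$, Lemma \ref{Lem-al-be-ga} specialises to $\tan\tfrac{\beta}{2}\tan\tfrac{\gamma}{2} = 3$. By Lemma \ref{Lem-albe-alga}, there is an $\alpha\beta\cdots$ vertex, and since its remainder $\tfrac{4\pi}{3} - \beta < \alpha < \beta$ (using $\beta > \alpha = \tfrac{2\pi}{3}$), the remainder must consist only of $\gamma$'s. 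So the vertex is $\alpha\beta\gamma^k$ with $\beta + k\gamma = \tfrac{4\pi}{3}$ and $k \geq 1$.

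The second step is to verify $\#\beta \leq \#\gamma$ at every vertex in order to apply Lemma \ref{Lem-Counting}. A vertex $\alpha^a\beta^b\gamma^c$ must have $b \leq 2$ since $3\beta > 3\alpha = 2\pi$; the cases to exclude are $b = 1, c = 0$ (i.e.\ $\alpha^a\beta$) and $b = 2, c \leq 1$ (i.e.\ $\alpha^a\beta^2$ and $\alpha^a\beta^2\gamma$). All but one of these are straightforward from $\pi > \beta > \alpha = \tfrac{2\pi}{3}$ and the angle sum; the exception is $\beta^2\gamma$, where I substitute $\gamma = 2\pi - 2\beta$ into $\tan\tfrac{\beta}{2}\tan\tfrac{\gamma}{2} = 3$, use $\tan(\pi - \beta) = -\tan\beta$ together with the double-angle formula, and derive $\tan^2\tfrac{\beta}{2} = 3$, so $\beta = \tfrac{2\pi}{3} = \alpha$, contradicting $\beta > \alpha$.

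With the hypothesis of the Counting Lemma verified, every vertex has $\#\beta = \#\gamma$, forcing $k = 1$ above, so $\alpha\beta\gamma$ is a vertex with $\beta + \gamma = \tfrac{4\pi}{3}$. Writing $u = \tfrac{\beta}{2}$ and $v = \tfrac{\gamma}{2}$, so $u + v = \tfrac{2\pi}{3}$ and $\tan u\tan v = 3$, the tangent addition formula gives $\tan u + \tan v = (1 - 3)\tan(u+v) = -2 \cdot (-\sqrt{3}) = 2\sqrt{3}$. Hence $\tan u$ and $\tan v$ are the two roots of $t^2 - 2\sqrt{3}\,t + 3 = 0$, which has the double root $\sqrt{3}$; so $\beta = \gamma = \tfrac{2\pi}{3}$, contradicting $\beta > \gamma$. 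I expect the main obstacle to be the elimination of $\beta^2\gamma$, which is the only potential vertex with $\#\beta > \#\gamma$ not killed by linear angle inequalities and which genuinely requires the geometric identity of Lemma \ref{Lem-al-be-ga}.
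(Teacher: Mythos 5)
Your proposal is correct, and it shares its skeleton with the paper's proof: both arguments pin down $\alpha=\tfrac{2}{3}\pi$, eliminate $\beta^2\gamma$ (the only candidate $\beta^2\cdots$ vertex) by substituting into the identity $\tan^2\tfrac{1}{2}\alpha=\tan\tfrac{1}{2}\beta\tan\tfrac{1}{2}\gamma$ of Lemma \ref{Lem-al-be-ga} and landing on $\beta=\tfrac{2}{3}\pi=\alpha$ (the paper phrases the same computation in terms of $\gamma$ and gets $\gamma=\tfrac{2}{3}\pi$), and then invoke the Counting Lemma to force $\alpha\beta\gamma$ to be a vertex. Where you genuinely diverge is the endgame. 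The paper, having reached $\AVC=\{\alpha^3,\alpha\beta\gamma\}$, finishes combinatorially: $\alpha^2\cdots=\alpha^3$ propagates squares around every vertex adjacent to an $\alpha^3$, so the tiling closes up as the monohedral cube and no rhombus can appear. You instead finish metrically: with $u=\tfrac{1}{2}\beta$, $v=\tfrac{1}{2}\gamma$, the two constraints $u+v=\tfrac{2}{3}\pi$ and $\tan u\tan v=3$ make $\tan u,\tan v$ the roots of $t^2-2\sqrt{3}\,t+3=(t-\sqrt{3})^2$, forcing $\beta=\gamma$ and contradicting $\beta>\gamma$. Both endings are sound (your use of the addition formula is legitimate since $1-\tan u\tan v=-2\neq0$ and $u,v\in(0,\tfrac{1}{2}\pi)$). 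Your version is more self-contained and purely computational — it shows the two vertex equations are geometrically incompatible with distinct prototiles, so one never has to construct or rule out a configuration — while the paper's version explains \emph{why} the case degenerates, namely that the would-be tiling is exactly the monohedral cube, which is a useful observation for the subsequent Proposition \ref{Prop-albega}.
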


\begin{proof} If $\alpha^3$ is a vertex, then we have $\alpha = \frac{2}{3}\pi$. Combined with $\beta>\alpha>\gamma$ and $\beta+\gamma>\pi$, we have $R(\beta^2)<\alpha, 2\gamma$. So $\beta^2\cdots=\beta^2\gamma$. 

Suppose both $\alpha^3, \beta^2\gamma$ are vertices. We get
\begin{align*}
\alpha=\tfrac{2}{3}\pi, \quad
\beta = \pi - \tfrac{1}{2}\gamma. 
\end{align*}
Substituting the above into \eqref{Eq-tan-al-be-ga}, we further get 
\begin{align*}
3 = \cot \tfrac{1}{4}\gamma \tan \tfrac{1}{2} \gamma = \frac{2}{1-\tan^2 \tfrac{1}{4}\gamma},
\end{align*}
which implies $\gamma= \frac{2}{3}\pi \ge \alpha$, a contradiction. So $\beta^2\gamma$ is not a vertex and hence $\beta^2\cdots$ is not a vertex.

By $\beta> \alpha = \frac{2}{3}\pi$ and $\beta+\gamma>\pi$ and no $\beta^2\cdots$, we get $\beta\cdots=\beta\gamma^c, \alpha\beta\gamma^c$. Counting Lemma (Lemma \ref{Lem-Counting}) implies $\beta\cdots=\alpha\beta\gamma$ and hence $\gamma\cdots=\alpha\beta\gamma$. Hence the vertices are
\begin{align}\label{Eq-AVC-al3-albega}
\AVC = \{ \alpha^3, \alpha\beta\gamma \}. 
\end{align}

Start at an $\alpha^3$, by $\alpha^2\cdots=\alpha^3$ we determine a monohedral tiling in the first picture of Figure \ref{Fig-a4-Tiling-f6-cube}. Therefore there is no dihedral tiling for $\alpha^3$.
\end{proof}

\begin{figure}[h!] 
\centering
\begin{tikzpicture}

\tikzmath{
\r=1.2;
\th=360/3;
\x=\r*cos(0.5*\th);
}

\begin{scope}[] 

\foreach \a in {0,1,2} {

\draw[rotate=\th*\a]
	(0:0) -- (90:\r) 
	(90:\r) -- (90-0.5*\th:2*\x)
	(90:\r) -- (90+0.5*\th:2*\x)
	(90-0.5*\th:2*\x) -- (90-0.5*\th:3*\x)
;

\node at (270+\th*\a:0.2*\r) {\small $\alpha$};
\node at (270+\th*\a:0.8*\r) {\small $\alpha$};
\node at (270+0.425*\th+\th*\a:0.8*\r) {\small $\alpha$};
\node at (270-0.425*\th+\th*\a:0.8*\r) {\small $\alpha$};

\node at (90+\th*\a:1.15*\r) {\small $\alpha$};
\node at (90+\th*\a:1.5*\r) {\small $\alpha$};
\node at (37.5+\th*\a:1.1*\r) {\small $\alpha$};
\node at (142.5+\th*\a:1.1*\r) {\small $\alpha$};

}

\end{scope}

\begin{scope}[xshift=4*\r cm] 

\foreach \a in {0,1,2} {

\draw[rotate=\th*\a]
	(0:0) -- (90:\r) 
	(90:\r) -- (90-0.5*\th:2*\x)
	(90:\r) -- (90+0.5*\th:2*\x)
	(90-0.5*\th:2*\x) -- (90-0.5*\th:3*\x)
;

}

\node at (270:0.2*\r) {\small $\alpha$};
\node at (270:0.8*\r) {\small $\alpha$};
\node at (270+0.425*\th:0.8*\r) {\small $\alpha$};
\node at (270-0.425*\th:0.8*\r) {\small $\alpha$};

\node at (150:0.2*\r) {\small $\beta$};
\node at (150:0.8*\r) {\small $\beta$};
\node at (100:0.75*\r) {\small $\gamma$};
\node at (200:0.8*\r) {\small $\gamma$};

\node at (30:0.2*\r) {\small $\gamma$};
\node at (30:0.8*\r) {\small $\gamma$};
\node at (80:0.75*\r) {\small $\beta$};
\node at (-20:0.8*\r) {\small $\beta$};

\node at (90:1.15*\r) {\small $\alpha$};
\node at (90:1.5*\r) {\small $\alpha$};
\node at (37.5:1.1*\r) {\small $\alpha$};
\node at (142.5:1.1*\r) {\small $\alpha$};

\node at (210:1.15*\r) {\small $\beta$};
\node at (210:1.5*\r) {\small $\beta$};
\node at (160:1.1*\r) {\small $\gamma$};
\node at (264:1.15*\r) {\small $\gamma$};

\node at (330:1.15*\r) {\small $\gamma$};
\node at (330:1.5*\r) {\small $\gamma$};
\node at (278:1.15*\r) {\small $\beta$};
\node at (22.5:1.1*\r) {\small $\beta$};

\end{scope}

\end{tikzpicture}
\caption{The tiling with $\alpha^3$ and the tiling with $ \alpha\beta\gamma $}
\label{Fig-a4-Tiling-f6-cube}
\end{figure}
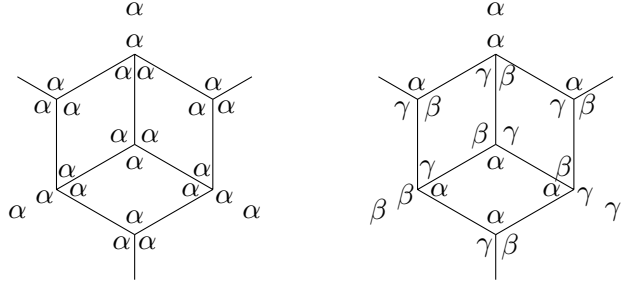

\begin{prop}\label{Prop-albega} The dihedral tiling with vertex $\alpha\beta\gamma$ is the second picture of Figure \ref{Fig-a4-Tiling-f6-cube}.
\end{prop}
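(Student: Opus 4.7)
My plan is to show that the vertex $\alpha\beta\gamma$ forces $\AVC \equiv \{\alpha\beta\gamma\}$, whereupon \eqref{Eq-quad-v3} and \eqref{Eq-quad-f} give $v_3 = 8$ and $f = 6$; counting $\alpha$-angle incidences ($1$ per vertex, $4$ per square) then yields $2$ squares and $4$ rhombi, and the tiling is the cube of Figure \ref{Fig-a4-Tiling-f6-cube}. The key identity comes from subtracting $\alpha + \beta + \gamma = 2\pi$ from the angle sum at an arbitrary vertex $\alpha^a\beta^b\gamma^c$,
\begin{equation*}
(a-1)\alpha + (b-1)\beta + (c-1)\gamma = 0.
\end{equation*}
When $a,b,c \geq 1$, every term is non-negative and therefore zero, forcing $a = b = c = 1$. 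So any other vertex must omit at least one of $\alpha, \beta, \gamma$.

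Next I will verify that $\#\beta \leq \#\gamma$ at every vertex, then apply the Counting Lemma (Lemma \ref{Lem-Counting}) to upgrade this to equality. The candidates with $\#\beta > \#\gamma$ are $\alpha^a\beta^b$ (no $\gamma$) and $\beta^b\gamma^c$ with $b > c \geq 1$ (no $\alpha$). For $\alpha^a\beta^b$ with $a \geq 1$ the identity reads $(a-1)\alpha + (b-1)\beta = \gamma$, which $\beta > \alpha > \gamma$ and $\alpha > \pi/2$ rule out in a short finite enumeration; the pure $\beta^b$ vertex is handled separately via $\beta = 2\pi/b$ combined with $\beta > \pi/2$ and the ordering $\beta > \alpha > \gamma$. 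For $\beta^b\gamma^c$ with $b > c \geq 1$, the sign of the identity forces $a = 0$, whence $(b-1)\beta + (c-1)\gamma = \alpha$ fails because $(b-1)\beta \geq \beta > \alpha$. Once $\#\beta = \#\gamma$ is established, the surviving vertices are $\alpha^a\beta^b\gamma^b$; the reduced identity $(a-1)\alpha + (b-1)(\beta + \gamma) = 0$ combined with $\beta + \gamma > \pi$, $\alpha < \pi$, and Proposition \ref{Prop-al3} (which excludes $\alpha^3$) admits only $a = b = 1$. Hence $\AVC \equiv \{\alpha\beta\gamma\}$.

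For uniqueness of the tiling, two squares cannot meet at an $\alpha\beta\gamma$ vertex (there is no $\alpha^2\cdots$ vertex available), so every edge of a square borders a rhombus. The equivalence $\alpha\vert\beta\cdots \Leftrightarrow \alpha\vert\gamma\cdots$ from Figure \ref{Fig-a4-adj-square-rhombus}, together with the $\beta\gamma\beta\gamma$ alternating angle pattern around a rhombus, forces the four rhombi around each square to distribute $\beta, \gamma$ contributions in a consistent alternating fashion that pins the tiling down, up to reflection, as the cube in Figure \ref{Fig-a4-Tiling-f6-cube}. I expect the main obstacle to be the Counting Lemma case analysis: ruling out every potential $\#\beta > \#\gamma$ vertex requires coordinating $\alpha > \pi/2$, $\beta > \alpha > \gamma$, $\alpha < \pi$, and $\beta + \gamma > \pi$ simultaneously, but each subcase resolves in a few lines.
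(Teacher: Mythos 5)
Your proposal is correct and takes essentially the same route as the paper: both reduce the vertex analysis to the hypothesis of the Counting Lemma (Lemma \ref{Lem-Counting}), conclude $\beta\cdots=\gamma\cdots=\alpha\beta\gamma$, and propagate from one such vertex to obtain the cube. Your identity $(a-1)\alpha+(b-1)\beta+(c-1)\gamma=0$ is a repackaging of the paper's remainder estimate $R(\beta^2)<R(\alpha\beta)=\gamma<\alpha$, and your explicit exclusion of $\alpha^3$ via Proposition \ref{Prop-al3} together with the count $f=6$ from \eqref{Eq-quad-v3} and \eqref{Eq-quad-f} are welcome but inessential additions to the paper's terser argument.
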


The tiling is given by the cube and has $2$ squares and $4$ rhombi. The tiling is the first one in Figure \ref{Fig-a4-platonic-archimedean-tilings}.

\begin{proof} By $\beta>\alpha>\gamma$ and $\alpha\beta\gamma$, we have $\beta>\frac{2}{3}\pi$ and $R(\beta^2)<R(\alpha\beta)=\gamma<\alpha$. Then $\beta^2\cdots$ is not a vertex and $\alpha\beta\cdots=\alpha\beta\gamma$. Hence $\beta\cdots= \alpha\beta\gamma, \beta\gamma^c$. Then Counting Lemma implies $\beta\cdots=\gamma\cdots=\alpha\beta\gamma$. Starting at an $\alpha\beta\gamma$, by $\beta\cdots=\gamma\cdots=\alpha\beta\gamma$, we obtain the second tiling in Figure \ref{Fig-a4-Tiling-f6-cube}. 
\end{proof}

\begin{prop}\label{Prop-be3} The dihedral tiling with vertex $\beta^3$ is the last tiling in Figure \ref{Fig-a4-platonic-archimedean-tilings}.
\end{prop}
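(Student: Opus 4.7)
The plan is to pin down the $\AVC$ and then construct the tiling from it. First I would derive $\beta = \tfrac{2}{3}\pi$ from $\beta^3$, then use \eqref{Eq-Angle-Ineq} and $\beta + \gamma > \pi$ to get $\alpha \in (\tfrac{1}{2}\pi, \tfrac{2}{3}\pi)$ and $\gamma \in (\tfrac{1}{3}\pi, \tfrac{2}{3}\pi)$. A direct check of the remainder $R(\beta^2) = \tfrac{2}{3}\pi$ shows it cannot be filled by any combination of $\alpha$'s and $\gamma$'s: any two of them sum to more than $\tfrac{2}{3}\pi$, while any single one is strictly less. Hence $\beta^2\cdots = \beta^3$. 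By Lemma \ref{Lem-albe-alga} the vertex $\alpha\beta\cdots$ exists and, having no $\beta^2$, must be $\alpha\beta\gamma^c$; the interval bounds force $c = 2$, giving the vertex $\alpha\beta\gamma^2$ and the relation $\alpha = \tfrac{4}{3}\pi - 2\gamma$.

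Next I would rule out every other vertex type $\alpha^a\beta^b\gamma^c$. Substituting the relation, the angle sum rewrites as $(2a + b - 3)\tfrac{2}{3}\pi + (c - 2a)\gamma = 0$; the balanced case $c = 2a$, $2a + b = 3$ recovers exactly $\beta^3$ and $\alpha\beta\gamma^2$, while in every other case $\gamma$ is pinned to a specific rational multiple of $\pi$. A short enumeration ruling out $b \ge 2$ with $a \ge 1$ (the excess $\alpha + 2\beta > 2\pi$), $a \ge 4$ (since $4\alpha > 2\pi$), $\alpha^2\gamma^c$ (gives $\gamma \in \{\tfrac{2}{9}\pi, \tfrac{1}{3}\pi, \tfrac{2}{3}\pi\}$, all invalid), and $\beta\gamma^c, \beta^2\gamma^c, \alpha\gamma^c$ (none give $\gamma$ in the valid range $(\tfrac{1}{3}\pi, \tfrac{4}{9}\pi)$ forced by $\alpha > \gamma$), leaves only the two candidates $\alpha^3\gamma$ and $\gamma^5$, both requiring $(\gamma, \alpha) = (\tfrac{2}{5}\pi, \tfrac{8}{15}\pi)$.

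These two candidates are where Lemma \ref{Lem-al-be-ga} becomes indispensable: the putative values must also satisfy $\tan^2 \tfrac{4}{15}\pi = \sqrt{3}\tan\tfrac{1}{5}\pi$. I would verify this identity fails by a direct trigonometric calculation (for instance, by expanding $\tan\tfrac{4}{15}\pi = \tan(\tfrac{1}{3}\pi - \tfrac{1}{15}\pi)$ and comparing the two sides through explicit surds for $\tan\tfrac{1}{5}\pi$), which is the step I expect to be the main obstacle, since it is the one piece not reducible to linear algebra on the angles. This settles $\AVC \equiv \{\beta^3, \alpha\beta\gamma^2\}$.

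The construction is then forced. At a $\beta^3$ vertex three rhombi meet symmetrically; each has an outer $\beta$-corner where only $\alpha\beta\gamma^2$ is available, with the arrangement $\vert\alpha\vert\beta\vert\gamma\vert\gamma\vert$ placing a square on one side of that corner and a second rhombus on the other. Propagating via the $\AVC$ closes up uniquely on the sphere, and \eqref{Eq-quad-v3}--\eqref{Eq-quad-f} with $v_3 = 8$ and $v_4 = 24$ yield $f = 30$, matching the quadrilateral subdivision of the truncated octahedron in Figure \ref{Fig-a4-Tilings-quad-subdiv-trunc-octahedron}.
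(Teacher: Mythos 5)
Your proof is correct and follows the paper's overall skeleton (derive $\beta=\tfrac{2}{3}\pi$, show $\alpha\beta\cdots=\alpha\beta\gamma^2$, pin down the $\AVC$, construct), but the crucial middle step is executed by a genuinely different route. The paper first solves the system $\{\beta=\tfrac23\pi,\ \alpha+2\gamma=\tfrac43\pi\}$ together with \eqref{Eq-tan-al-be-ga} to get numerical values ($\gamma\approx0.39874\pi$, etc.) and then asserts that these values admit no further vertices; you instead keep $\gamma$ symbolic, observe that the vertex angle sum reduces to $(2a+b-3)\tfrac23\pi+(c-2a)\gamma=0$ so that any vertex other than $\beta^3,\alpha\beta\gamma^2$ forces $\gamma$ to be a rational multiple of $\pi$, and a finite check leaves only $\alpha^3\gamma$ and $\gamma^5$ at $\gamma=\tfrac25\pi$, which you then kill with \eqref{Eq-tan-al-be-ga}. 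Your version is arguably the more robust one: the paper's exclusion implicitly requires enough numerical precision to distinguish $\gamma\approx0.39874\pi$ from $\tfrac25\pi=0.4\pi$, whereas you isolate exactly this near-miss and dispose of it by an explicit (and verifiable) failure of $\tan^2\tfrac{4}{15}\pi=\sqrt3\tan\tfrac15\pi$; the price is the longer case enumeration. One small slip: your parenthetical reason for excluding $b\ge2$ with $a\ge1$, namely ``$\alpha+2\beta>2\pi$'', is false here since $\alpha<\tfrac23\pi$ gives $\alpha+2\beta<2\pi$; the correct reason is that you have already shown $\beta^2\cdots=\beta^3$, so any vertex with two $\beta$'s is $\beta^3$ (equivalently, $\alpha+2\beta=2\pi$ would force $\alpha=\tfrac23\pi=\beta$). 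The final construction step is asserted at the same level of detail as in the paper, and your count $v_3=8$, $v_4=24$, $f=30$ is consistent with \eqref{Eq-quad-v3} and \eqref{Eq-quad-f}.
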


The dihedral tiling has $6$ squares and $24$ rhombi. 

\begin{proof} By $\beta^3$, we get $\beta=\frac{2}{3}\pi$. Combined with $\beta>\alpha>\gamma$ and $\beta+\gamma>\pi$, we get the following,
\begin{align*}
\alpha> \tfrac{1}{2}\pi, \quad
\beta=\tfrac{2}{3}\pi, \quad
\gamma>\tfrac{1}{3}\pi.
\end{align*}

Lemma \ref{Lem-albe-alga} implies that $\alpha\beta\cdots$ is a vertex. By $R(\alpha\beta)<\frac{5}{6}\pi<\alpha+\gamma, 3\gamma$, we get $\alpha\beta\cdots=\alpha\beta\gamma^2$. Then $\beta^3, \alpha\beta\gamma^2$ imply
\begin{align*}
\alpha + 2\gamma = \tfrac{4}{3}\pi, \quad
\beta= \tfrac{2}{3}\pi.
\end{align*}
Substituting the above into \eqref{Eq-tan-al-be-ga}, we get
\begin{align*}
\alpha \approx 0.53584\pi, \quad
\beta= \tfrac{2}{3}\pi, \quad
\gamma \approx 0.39874\pi, \quad
x \approx 0.20590\pi.
\end{align*}
These angle values determine all the vertices. Hence we get
\begin{align*}
\AVC = \{ \beta^3, \alpha\beta\gamma^2 \}.
\end{align*}
With the above, one can obtain the last tiling in Figure \ref{Fig-a4-platonic-archimedean-tilings}. 
\end{proof}

\begin{prop}\label{Prop-albe2} The dihedral tilings with vertex $\alpha\beta^2$ are the two tilings in Figure \ref{Fig-a4-Tilings-albe2-albega2}.
\end{prop}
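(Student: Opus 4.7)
The plan is to follow the same template as the preceding propositions: first pin down all angle constraints coming from $\alpha\beta^2$, next determine the $\AVC$, then perform a combinatorial construction and show that exactly two tilings arise up to equivalence.

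First, I would extract numerical constraints. The vertex equation gives $\alpha+2\beta=2\pi$, so $\alpha=2\pi-2\beta$. Combining this with $\beta>\alpha>\tfrac12\pi$ from \eqref{Eq-Angle-Ineq} and the square inequality, one gets $\tfrac23\pi<\beta<\tfrac34\pi$ and $\tfrac12\pi<\alpha<\tfrac23\pi$; also $\gamma<\alpha<\tfrac23\pi$ and $\gamma>\pi-\beta>\tfrac14\pi$. With these strict bounds, I would rule out every other degree $3$ vertex: $\alpha^3$ is excluded by Proposition \ref{Prop-al3}; $\beta^3$ and $\alpha^2\beta$ both force $\beta=\tfrac23\pi$; $\beta^2\gamma$ gives $\gamma=\alpha$; $\alpha\beta\gamma$ gives $\gamma=\beta$; each is a contradiction. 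So $\alpha\beta^2$ is the only degree $3$ vertex.

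Second, I would use Lemma \ref{Lem-albe-alga}: $\alpha\gamma\cdots$ must be a vertex, and hence of degree $\ge 4$. Its remainder equals $R(\alpha\gamma)=2\beta-\gamma$. Writing $\alpha\gamma\cdots=\alpha^a\beta^b\gamma^c$ and using the bounds $\alpha>\tfrac12\pi$, $\beta>\tfrac23\pi$, I would show that $a=1$ and $b\le 1$, then rule out $b=0$ (which would force a purely $\alpha\gamma^c$ vertex whose angle sum clashes with $\alpha+2\beta=2\pi$ together with Lemma \ref{Lem-al-be-ga}) and likewise rule out $c\ge 3$, leaving only $\alpha\beta\gamma^2$. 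Hence $\beta=2\gamma$. Substituting $\alpha=2\pi-2\beta$ and $\beta=2\gamma$ into \eqref{Eq-tan-al-be-ga} yields a single trigonometric equation in $\gamma$, whose unique solution in $(\tfrac14\pi,\tfrac13\pi)$ gives the numerical values of $\alpha,\beta,\gamma,x$; these in turn force
\[
\AVC\equiv\{\alpha\beta^2,\ \alpha\beta\gamma^2\}.
\]

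Third, with the $\AVC$ in hand, I would run the combinatorial construction. Every vertex contains exactly one $\alpha$, so the four corners of any square are themselves squares-of-rhombi configurations of type $\alpha\beta^2$ or $\alpha\beta\gamma^2$. Working outward from a chosen square, I would use the arrangement rules of Figure \ref{Fig-a4-adj-square-rhombus} (in particular that $\alpha\vert\beta$ and $\alpha\vert\gamma$ determine the adjacent rhombus completely) to propagate tiles: each choice at a corner forces the rhombus placed there, and each resulting edge forces the next tile via the $\AVC$. Because the rhombi around every square form a consistent local pattern, the global tiling is essentially determined by how many squares are of the ``$\alpha\beta^2$ at all four corners'' type. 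Verifying this claim explicitly (which will be promoted to a separate statement, cf.\ the remark in the introduction about determining a tiling from a single such square) gives two combinatorial completions, matching the two triangular fusions of the snub cube depicted in Figure \ref{Fig-a4-platonic-archimedean-tilings}.

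The main obstacle is this last combinatorial step: tracking the propagation of tile choices around the sphere without missing a branch, and in particular identifying the precise point where exactly two non-equivalent completions split off. I would manage it by first showing that a single square all of whose four vertices are $\alpha\beta^2$ forces its entire surrounding belt, then bounding how many such ``fully $\alpha\beta^2$'' squares a tiling can contain; the cut-and-rotate modification described in Figure \ref{Fig-a4-tri-fusion-modification} then supplies the equivalence between two of the apparent possibilities, leaving exactly the two tilings claimed.
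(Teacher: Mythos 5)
There is a genuine gap at the point where you pass from the angle bounds to $\AVC\equiv\{\alpha\beta^2,\alpha\beta\gamma^2\}$. Your claim that the bounds force $a=1$ and $b\le1$ in $\alpha\gamma\cdots=\alpha^a\beta^b\gamma^c$ is only half right: it holds when $b\ge1$ (since $\beta+\gamma>\pi$ and $2\alpha>\pi$ kill $\alpha^2\beta\gamma\cdots$), but when $b=0$ the bounds $\alpha>\tfrac12\pi$, $\gamma>\tfrac14\pi$ still admit $\alpha^3\gamma$, $\alpha^2\gamma^2$, $\alpha^2\gamma^3$, $\alpha\gamma^3$, $\alpha\gamma^4$, $\alpha\gamma^5$ (e.g.\ $3\alpha+\gamma=2\pi$ is consistent with $\tfrac12\pi<\alpha<\tfrac{7}{12}\pi$). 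Moreover, your proposed reason for discarding $b=0$ --- that $\alpha+c\gamma=2\pi$ ``clashes'' with $\alpha+2\beta=2\pi$ and \eqref{Eq-tan-al-be-ga} --- does not work: these are three equations in three unknowns and each candidate yields a perfectly consistent set of angle values (the paper notes exactly this, that each candidate in \eqref{Eq-be3-alga} together with $\alpha\beta^2$ \emph{determines} the angles). The paper therefore has to eliminate every one of these candidates by explicit adjacent-angle deductions: the $\gamma\vert\gamma\vert\gamma$ configuration forces an impossible $\alpha^2\gamma\cdots$ vertex (killing $\alpha\gamma^{3,4,5}$), the $\alpha\vert\alpha$ and $\alpha\vert\gamma\vert\alpha$ configurations kill $\alpha^2\gamma^{2,3}$ and $\alpha^3\gamma$. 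Without some substitute for these arguments your reduction to $\{\alpha\beta^2,\alpha\beta\gamma^2\}$ is unjustified. (A small further slip: the solution has $\gamma\approx0.362\pi$, which is not in your claimed interval $(\tfrac14\pi,\tfrac13\pi)$.)

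The final combinatorial step is also thinner than it needs to be. The paper's mechanism is to mark the vertices $\vert\alpha\vert\beta\vert\gamma\vert\gamma\vert$ (which must exist because $\alpha\beta^2=\vert\alpha\vert\beta\vert\beta\vert$ forces a $\gamma\vert\gamma$ vertex via Figure \ref{Fig-a4-adj-square-rhombus}), show that a square cannot carry two such vertices diagonally in one of the two possible orientations, and then classify the three resulting distributions on a square, two of which propagate to the two tilings and one of which is contradictory. Your organizing principle --- counting squares with $\alpha\beta^2$ at all four corners --- is not obviously equivalent and is not carried out; in particular you give no argument that the propagation has no unexamined branches, which is precisely the hard part you yourself identify. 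As written, the proposal would need the paper's case analysis (or an equivalent) inserted at both stages before it constitutes a proof.
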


Each of these tilings has $6$ squares and $16$ rhombi. They are the tilings in the second and the third tilings in Figure \ref{Fig-a4-platonic-archimedean-tilings}, first tiling (respectively the second) of which corresponds to the first (respectively the second) of Figure \ref{Fig-a4-Tilings-albe2-albega2}.

\begin{proof} By $\beta>\alpha>\gamma$ and $\alpha\beta^2$ and $\beta+\gamma>\pi$, we get $\pi > \beta > \frac{2}{3}\pi>\alpha > \frac{1}{2}\pi$ and $2\gamma>\alpha$. Then $\alpha > \frac{1}{2}\pi$ and $\alpha\beta^2$ imply $\frac{3}{4}\pi > \beta$. By $\alpha > \frac{1}{2}\pi$ and $\beta+\gamma>\pi$, we also get $\gamma>\frac{1}{2}\alpha>\frac{1}{4}\pi$. By $3\gamma>\frac{3}{4}\pi > \beta$ and $\alpha\beta^2$, we also have $\alpha+\beta+3\gamma>2\pi$. This further implies $\beta^2\cdots=\alpha\beta^2$ and $\alpha\beta\gamma\cdots=\alpha\beta\gamma^2$. We summarise the inequalities below,
\begin{align*}
\alpha > \tfrac{1}{2}\pi, \quad
\beta > \tfrac{2}{3}\pi, \quad
\gamma > \tfrac{1}{4}\pi.
\end{align*}

Lemma \ref{Lem-albe-alga} implies that $\alpha\gamma\cdots$ is a vertex. Then the above inequalities and $\alpha\beta\gamma\cdots=\alpha\beta\gamma^2$ imply
\begin{align}\label{Eq-be3-alga}
\alpha\gamma\cdots=\alpha^3\gamma, \alpha^2\gamma^2, \alpha^2\gamma^3, \alpha\gamma^3, \alpha\gamma^4, \alpha\gamma^5, \alpha\beta\gamma^2.
\end{align}
Similar to the calculation in Proposition \ref{Prop-be3}, any vertex in \eqref{Eq-be3-alga} combined with $\alpha\beta^2$ uniquely determine the angle values through \eqref{Eq-tan-al-be-ga}, which further imply no other vertices with the exception of $\{ \alpha\beta\gamma^2, \alpha\gamma^4 \}$. In particular, if $\alpha\beta\gamma^2, \alpha\gamma^4$ are not vertices, then $\alpha\beta^2$ and exactly one of the remaining vertices in \eqref{Eq-be3-alga} are the only vertices.

We further divide the discussion into the cases below. 

\begin{case*}[One of $\alpha\gamma^3, \alpha\gamma^4, \alpha\gamma^5$] We already know that the vertices are
\begin{align*}
\AVC &= \{ \alpha\beta^2, \alpha\gamma^3 \}; \\
\AVC &= \{ \alpha\beta^2, \alpha\gamma^4, \alpha\beta\gamma^2 \}; \\
\AVC &= \{ \alpha\beta^2, \alpha\gamma^5 \}.
\end{align*}


The arrangement of $\gamma\vert\gamma\vert\gamma$ determines tiles $T_1, T_2, T_3$ in the first picture of Figure \ref{Fig-a4-AAD-albe2-algac/al2gac-gagaga-algaal-alal}. By $\beta_1\beta_2\cdots=\beta_2\beta_3\cdots=\alpha\beta^2$, we also determine $T_4, T_5$. Then we get $\alpha_4\alpha_5\gamma_2\cdots$, a contradiction.

\begin{figure}[h!] 
\centering
\begin{tikzpicture}

\tikzmath{
\s=1;
}

\begin{scope}[]

\tikzmath{
\r=1.2;
\gon=4;
\th=360/\gon;
\x=\r*cos(\th/2);
}

\foreach \a in {0,...,3} {
\draw[rotate=\th*\a] 
	(0:0) -- (0.5*\th:\r)
; 
}

\foreach \a in {0,...,2} {
\draw[rotate=\th*\a] 
	(0.5*\th:\r) -- (0:2*\x)
	(-0.5*\th:\r) -- (0:2*\x)
;
}

\draw[]
	(90:2*\x) -- ([shift=(90:2*\x)]45:0.5*\r)
	(90:2*\x) -- ([shift=(90:2*\x)]135:0.5*\r)
;

\node at (180:0.35*\x) {\small $\gamma$}; 
\node at (180:1.7*\x) {\small $\gamma$};
\node at (1.625*\th:0.85*\r) {\small $\beta$};
\node at (2.375*\th:0.85*\r) {\small $\beta$};

\node at (90:0.35*\x) {\small $\gamma$}; 
\node at (90:1.7*\x) {\small $\gamma$}; 
\node at (0.625*\th:0.85*\r) {\small $\beta$}; 
\node at (1.375*\th:0.85*\r) {\small $\beta$};

\node at (0:0.35*\x) {\small $\gamma$}; 
\node at (0:1.7*\x) {\small $\gamma$};
\node at (0.375*\th:0.85*\r) {\small $\beta$};
\node at (-0.375*\th:0.85*\r) {\small $\beta$};

\node at (100:2*\x) {\small $\alpha$}; 
\node at (1.5*\th:1.15*\r) {\small $\alpha$};

\node at (80:2*\x) {\small $\alpha$}; 
\node at (0.5*\th:1.15*\r) {\small $\alpha$};

\node[inner sep=1,draw,shape=circle] at (180:1*\x) {\small $1$};
\node[inner sep=1,draw,shape=circle] at (90:1*\x) {\small $2$};
\node[inner sep=1,draw,shape=circle] at (0:1*\x) {\small $3$};
\node[inner sep=1,draw,shape=circle] at (1.5*\th:1.5*\r) {\small $4$};
\node[inner sep=1,draw,shape=circle] at (0.5*\th:1.5*\r) {\small $5$};

\end{scope}

\begin{scope}[xshift=5*\s cm, yshift=0.5*\s cm]

\tikzmath{
\r=0.8;
\th=360/4;
\x=\r*cos(0.5*\th);
\R = sqrt(\x^2+(3*\x)^2);
\aR = acos(3*\x/\R);
}

\foreach \aa in {-1,1} {

\tikzset{shift={(\aa*\x,0)}}

\foreach \a in {0,...,3} {

\draw[rotate=\th*\a]
	(0.5*\th:\r) -- (1.5*\th:\r)
;

\node at (0.5*\th+\th*\a: 0.65*\r) {\small $\alpha$}; 

}

}

\foreach \aa in {-1,1} {

\tikzset{shift={(\x,\aa*2*\x)}, yscale=\aa}

\foreach \a in {0,...,3} {

\draw[rotate=\th*\a]
	(0.5*\th:\r) -- (1.5*\th:\r)
;

}

\foreach \a in {0,1} {

\node at (1.5*\th+\a*2*\th: 0.65*\r) {\small $\beta$}; 
\node at (0.5*\th+\a*2*\th: 0.65*\r) {\small $\gamma$}; 

}

}

\node at (2.25*\x, \x) {\small $\beta$};
\node at (2.25*\x, -\x) {\small $\beta$};

\node[inner sep=1,draw,shape=circle] at (-\x,0) {\small $1$};
\node[inner sep=1,draw,shape=circle] at (\x,0) {\small $2$};
\node[inner sep=1,draw,shape=circle] at (\x,2*\x) {\small $3$};
\node[inner sep=1,draw,shape=circle] at (\x,-2*\x) {\small $4$};
\node[inner sep=1,draw,shape=circle] at (3*\x,0) {\small $5$};

\end{scope}

\end{tikzpicture}
\caption{The arrangements of $\gamma\vert\gamma\vert\gamma$ and $\alpha \vert \gamma \vert \alpha$ and $\alpha\vert\alpha$}
\label{Fig-a4-AAD-albe2-algac/al2gac-gagaga-algaal-alal}
\end{figure}
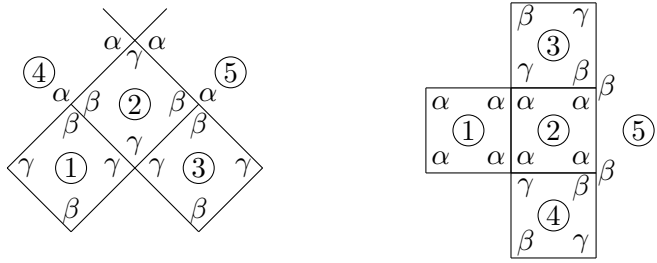

Hence $\alpha\gamma^3, \alpha\gamma^4, \alpha\gamma^5$ are not vertices.
\end{case*}

\begin{case*}[One of $\alpha^2\gamma^2, \alpha^2\gamma^3$] We already know that the vertices are
\begin{align*}
\AVC &= \{ \alpha\beta^2, \alpha^2\gamma^2 \}; \\
\AVC &= \{ \alpha\beta^2, \alpha^2\gamma^3 \}.
\end{align*}
From the above, $\gamma^3, \gamma^4\cdots$ are not vertices and $\alpha\beta\cdots=\alpha\beta^2$ and $\alpha^2\cdots=\alpha\gamma\cdots=\alpha^2\gamma^c$ where $c=2,3$.

The arrangement of $\alpha\vert\alpha$ determines tiles $T_1, T_2$ in the second picture of Figure \ref{Fig-a4-AAD-albe2-algac/al2gac-gagaga-algaal-alal}. Up to mirror symmetry, $\alpha^2\cdots=\alpha^2\gamma^c$ determines $T_3, T_4$. Then $\alpha_2\beta_3\cdots, \alpha_2\beta_4\cdots=\alpha\beta^2$ give two adjacent $\beta$'s in $T_5$, a contradiction. So $\alpha\vert\alpha\cdots$ is not a vertex.

By no $\alpha\vert\alpha\cdots$, we know that $\alpha^2\gamma^c$ with $c=2,3$ has $\alpha\vert \gamma \vert\alpha$. We reverse the deduction in the first picture of Figure \ref{Fig-a4-AAD-albe2-algac/al2gac-gagaga-algaal-alal}. By $\alpha\vert \gamma \vert\alpha$, we determine tiles $T_4, T_2, T_5$. Then $\alpha_4\beta_2\cdots, \alpha_5\beta_2\cdots=\alpha\beta^2$ determine the angles in $T_1, T_3$. This implies $\gamma_1\vert\gamma_2\vert\gamma_3\cdots=\alpha^2\gamma^3$ with $\alpha\vert\alpha$, a contradiction.

Hence $\alpha^2\gamma^2, \alpha^2\gamma^3$ are not vertices.
\end{case*}

\begin{case*}[$\alpha^3\gamma$] We already know that the vertices are
\begin{align*}
\AVC = \{ \alpha\beta^2, \alpha^3\gamma \}.
\end{align*}
Hence $\gamma^3\cdots$ is not a vertex and $\alpha\beta\cdots=\alpha\beta^2$.

For $\alpha\vert\gamma\vert\alpha$, the same reverse deduction in the first picture of Figure \ref{Fig-a4-AAD-albe2-algac/al2gac-gagaga-algaal-alal} implies $\gamma_2\cdots=\gamma^3\cdots$, a contradiction. Hence $\alpha^3\gamma$ is not a vertex.
\end{case*}

\begin{case*}[$\alpha\beta\gamma^2$] We have already shown that $\alpha\gamma^4$ is not a vertex. So the vertices are 
\begin{align*}
\AVC = \{ \alpha\beta^2, \alpha\beta\gamma^2 \}.
\end{align*}
Then by \eqref{Eq-tan-al-be-ga}, we get
\begin{align*}
\alpha \approx 0.55138\pi, \quad
\beta \approx 0.72431\pi, \quad
\gamma \approx 0.36216\pi, \quad
x \approx 0.24273\pi.
\end{align*}
We know that $\alpha^2\cdots, \beta^2\gamma\cdots$ are not vertices and $\alpha\cdots=\alpha\beta^2, \alpha\beta\gamma^2$ and $\beta^2\cdots=\alpha\beta^2$ and $\gamma\cdots=\alpha\beta\gamma^2$.

We know that $\alpha\beta^2 = \vert \alpha \vert \beta \vert \beta \vert$ is a vertex. By the second picture of Figure \ref{Fig-a4-adj-square-rhombus}, $\beta \vert \beta \cdots$ is a vertex if and only if $\gamma\vert\gamma \cdots$ is a vertex. Then $\gamma \vert \gamma \cdots = \vert \alpha \vert \beta \vert \gamma \vert \gamma \vert$ a vertex. We denote such vertex by \quotes{$\bullet$}. Hence there is a square with at least one $\bullet$. We classify the distribution of $\bullet$'s.


Suppose a square has diagonal $\bullet$'s arranged as in the first picture of Figure \ref{Fig-a4-albe2-sq-diagonal-albegaga}. Then we have tiles $T_1, T_2, T_3, T_4, T_5$. By $\beta_2\beta_4\cdots,\beta_3\beta_5\cdots=\alpha\beta^2$, we further determine $T_6, T_7$, contradicting $\gamma_4\gamma_5\cdots=\alpha\beta\gamma^2$. Then a square with diagonal $\bullet$'s must be arranged as in the second picture. This implies that the other two vertices at the square are also $\bullet$'s (indicated by \textcolor{gray!75}{$\bullet$}'s).

\begin{figure}[h!]
\centering
\begin{tikzpicture}[scale=1]

\tikzmath{
\s = 1;
\r= 0.9;
\th=360/4;
\x=\r*cos(0.5*\th);
}

\begin{scope}[]

\foreach \a in {0,1,2,3} {

\draw[rotate=\a*\th]
	(90:\r) -- (90+\th:\r)
;

\node at (90+\a*\th:0.65*\r) {\small $\alpha$};

}

\foreach \a in {0,1,2} {

\draw[rotate=\a*\th]
	(0:\r) -- (0:2*\r)
;

}

\foreach \aa in {-1,1} {

\tikzset{xscale=\aa}

\draw[] 
	(0:\r) -- (\r,\r)
	(\r,\r) -- (90:2*\r) 
	 (\r,\r) --  (2*\r,\r)
;

\node at (1.2*\r, 0.2*\r) {\small $\gamma$}; 
\node at (1.2*\r, 0.75*\r) {\small $\beta$}; 

\node at (1*\r, -0.35*\r) {\small $\beta$};

\node at (0.85*\r, 0.35*\r) {\small $\gamma$}; 
\node at (0.15*\r, 1.05*\r) {\small $\beta$};
\node at (0.2*\r, 1.5*\r) {\small $\gamma$};
\node at (0.85*\r, 0.85*\r) {\small $\beta$};

\node at (1*\r, 1.25*\r) {\small $\alpha$}; 

}

	(90:2*\r)  -- (90:2.5*\r) 
;

\node [shape = circle, fill = black, minimum size = 0.16 cm, inner sep=0pt] at (0:\r) {};
\node [shape = circle, fill = black, minimum size = 0.16 cm, inner sep=0pt] at (180:\r) {};

\node[inner sep=1,draw,shape=circle] at (0,0) {\footnotesize $1$};
\node[inner sep=1,draw,shape=circle] at (1.5*\r,0.5*\r) {\footnotesize $2$};
\node[inner sep=1,draw,shape=circle] at (-1.5*\r,0.5*\r) {\footnotesize $3$};
\node[inner sep=1,draw,shape=circle] at (0.5*\r,\r) {\footnotesize $4$};
\node[inner sep=1,draw,shape=circle] at (-0.5*\r,\r) {\footnotesize $5$};

\node[inner sep=1,draw,shape=circle] at (1.5*\r,1.5*\r) {\footnotesize $6$};
\node[inner sep=1,draw,shape=circle] at (-1.5*\r,1.5*\r) {\footnotesize $7$};

\end{scope}

\begin{scope}[xshift=5*\s cm]

\foreach \a in {0,1,2,3} {

\draw[rotate=\a*\th]
	(90:\r) -- (90+\th:\r)
	(0:\r) -- (1.5*\r,0.5*\r)
	(0:\r) -- (1.5*\r,-0.5*\r)
;

\node at (90+\a*\th:0.65*\r) {\small $\alpha$};

\node at ([rotate=\a*\th]1.35*\r, 0) {\small $\gamma$}; 
\node at ([rotate=\a*\th]1*\r, 0.35*\r) {\small $\beta$}; 
\node at ([rotate=\a*\th]1*\r, -0.35*\r) {\small $\gamma$};

}

\foreach \a in {0,2} {

\node [shape = circle, fill = black, minimum size = 0.16 cm, inner sep=0pt] at (0+\a*\th:\r) {};
\node [shape = circle, fill = gray!75, minimum size = 0.16 cm, inner sep=0pt] at (\th+\a*\th:\r) {};

}

\end{scope}

\end{tikzpicture}
\caption{Diagonal $\vert \alpha \vert \beta \vert \gamma \vert \gamma \vert$'s at a square}
\label{Fig-a4-albe2-sq-diagonal-albegaga}
\end{figure}

Therefore there are three possible distributions of $\bullet$'s in Figure \ref{Fig-a4-albe2-sq-albegaga-distribution}. The first picture is the second of Figure \ref{Fig-a4-albe2-sq-diagonal-albegaga}. The second and the third picture of Figure \ref{Fig-a4-albe2-sq-albegaga-distribution} have two adjacent $\bullet$'s and exactly one $\bullet$ respectively. By similar argument, we uniquely determine the angle arrangements of $T_1, T_2, ..., T_5$ in these pictures. The third picture leads to a contradiction.


\begin{figure}[h!]
\centering
\begin{tikzpicture}[scale=1]

\tikzmath{
\s = 1;
\r= 0.8;
\th=360/4;
\x=\r*cos(0.5*\th);
}

\begin{scope}[] 

\foreach \a in {0,1,2,3} {

\draw[rotate=\a*\th]
	(0.5*\th:\r) -- (1.5*\th:\r)
	(0.5*\th:\r) -- (\x,\x+0.5*\r)
	(0.5*\th:\r) -- (\x+0.5*\r,\x)
;

\node [shape = circle, fill = black, minimum size = 0.16 cm, inner sep=0pt] at (0.5*\th+\a*\th:\r) {};

\node at (0.5*\th+\a*\th:0.65*\r) {\small $\alpha$};

\node at ([rotate=\a*\th]0.65*\x,1.35*\x) {\small $\beta$};
\node at ([rotate=\a*\th]1.35*\x,0.65*\x) {\small $\gamma$};
\node at ([rotate=\a*\th]1.35*\x,1.35*\x) {\small $\gamma$};

}


\node[inner sep=1,draw,shape=circle] at (0,0) {\small $1$};
\node[inner sep=1,draw,shape=circle] at (0,2*\x) {\small $2$};
\node[inner sep=1,draw,shape=circle] at (2*\x,0) {\small $3$};
\node[inner sep=1,draw,shape=circle] at (0,-2*\x) {\small $4$};
\node[inner sep=1,draw,shape=circle] at (-2*\x,0) {\small $5$};

\end{scope}

\begin{scope}[xshift=3.25*\s cm] 

\foreach \a in {0,1,2,3} {

\draw[rotate=\a*\th]
	(0.5*\th:\r) -- (1.5*\th:\r)
;

\node at (0.5*\th+\a*\th:0.65*\r) {\small $\alpha$};

}

\foreach \a in {0,1,2} {

\draw[rotate=\a*\th]
	(0.5*\th:\r) -- (\x,\x+0.5*\r)
	(0.5*\th:\r) -- (\x+0.5*\r,\x)
;

}

\draw[]
	(-0.5*\th:\r) -- (-0.5*\th:1.5*\r)
;

\foreach \a in {0,1} {

\node at ([rotate=\a*\th]0.65*\x,1.35*\x) {\small $\beta$};
\node at ([rotate=\a*\th]1.35*\x,0.65*\x) {\small $\gamma$};
\node at ([rotate=\a*\th]1.35*\x,1.35*\x) {\small $\gamma$};

\node [shape = circle, fill = black, minimum size = 0.16 cm, inner sep=0pt] at (0.5*\th+\a*\th:\r) {};

}

\node at (0.75*\x,-1.35*\x) {\small $\beta$};
\node at (1.35*\x,-0.75*\x) {\small $\beta$};

\node at (-1.35*\x,-1.35*\x) {\small $\beta$};
\node at (-0.7*\x,-1.35*\x) {\small $\gamma$};
\node at (-1.3*\x,-0.65*\x) {\small $\gamma$};

\node[inner sep=1,draw,shape=circle] at (0,0) {\small $1$};
\node[inner sep=1,draw,shape=circle] at (0,2*\x) {\small $2$};
\node[inner sep=1,draw,shape=circle] at (2*\x,0) {\small $3$};
\node[inner sep=1,draw,shape=circle] at (0,-2*\x) {\small $4$};
\node[inner sep=1,draw,shape=circle] at (-2*\x,0) {\small $5$};

\end{scope}

\begin{scope}[xshift=6.5*\s cm] 

\foreach \a in {0,1,2,3} {

\draw[rotate=\a*\th]
	(0.5*\th:\r) -- (1.5*\th:\r)
;

\node at (0.5*\th+\a*\th:0.65*\r) {\small $\alpha$};

}

\foreach \a in {0,1,2} {

\draw[rotate=\a*\th]
	(0.5*\th:\r) -- (\x,\x+0.5*\r)
	(0.5*\th:\r) -- (\x+0.5*\r,\x)
;

}

\draw[]
	(-0.5*\th:\r) -- (-0.5*\th:1.5*\r)
;

\draw[]
	(-0.5*\th:\r) -- (-0.5*\th:1.5*\r)
;

\foreach \a in {0} {

\node at ([rotate=\a*\th]0.65*\x,1.35*\x) {\small $\beta$};
\node at ([rotate=\a*\th]1.35*\x,0.65*\x) {\small $\gamma$};
\node at ([rotate=\a*\th]1.35*\x,1.35*\x) {\small $\gamma$};

}

\node [shape = circle, fill = black, minimum size = 0.16 cm, inner sep=0pt] at (0.5*\th:\r) {};

\node at (-1.35*\x,1.35*\x) {\small $\beta$};
\node at (-0.7*\x,1.35*\x) {\small $\gamma$};
\node at (-1.3*\x,0.65*\x) {\small $\gamma$};

\node at (0.75*\x,-1.35*\x) {\small $\beta$};
\node at (1.35*\x,-0.75*\x) {\small $\beta$};

\node at (-1.35*\x,-1.35*\x) {\small $?$};
\node at (-0.7*\x,-1.35*\x) {\small $\gamma$};
\node at (-1.3*\x,-0.65*\x) {\small $\beta$};

\node[inner sep=1,draw,shape=circle] at (0,0) {\small $1$};
\node[inner sep=1,draw,shape=circle] at (0,2*\x) {\small $2$};
\node[inner sep=1,draw,shape=circle] at (2*\x,0) {\small $3$};
\node[inner sep=1,draw,shape=circle] at (0,-2*\x) {\small $4$};
\node[inner sep=1,draw,shape=circle] at (-2*\x,0) {\small $5$};

\end{scope}

\end{tikzpicture}
\caption{Distributions of $\vert \alpha \vert \beta \vert \gamma \vert \gamma \vert$'s at a square}
\label{Fig-a4-albe2-sq-albegaga-distribution}
\end{figure}
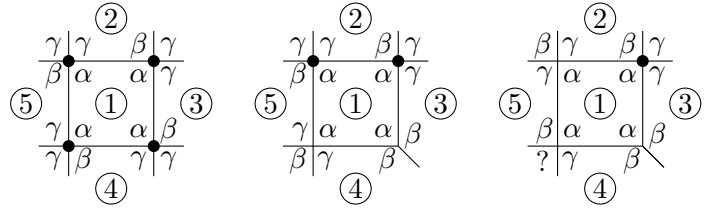



Now we know that a square in the tiling has the angle arrangement of the first or the second picture of Figure \ref{Fig-a4-albe2-sq-albegaga-distribution}. The first angle arrangement (respectively the second) of Figure \ref{Fig-a4-albe2-sq-albegaga-distribution} uniquely determines the first (respectively the second) tiling of Figure \ref{Fig-a4-Tilings-albe2-albega2} where the centre square is $T_1$. The second tiling is drawn with a different perspective from the same tiling (second one) in the third tiling of Figure \ref{Fig-a4-platonic-archimedean-tilings}.


\begin{figure}[h!]
\centering
\begin{tikzpicture}[scale=1]

\tikzmath{
\s = 1;
\r= 0.4;
\th=360/4;
\ph=360/8;
\x=\r*cos(0.5*\th);
}

\begin{scope}[] 

\fill[gray!50] (0,0) circle (4.5*\r);
	
\fill[white] (0,0) circle (4*\r);

\fill[gray!50]
	(0.5*\th:\r) -- (1.5*\th:\r) -- (2.5*\th:\r) -- (3.5*\th:\r) -- cycle
;

\foreach \a in {0,...,3} {

\fill[gray!50, rotate=\a*\th]
	(90:2*\r) -- (90:3*\r) -- (90-0.5*\th:3*\r) -- (90-0.5*\th:2*\r) -- cycle
;

}

\foreach \a in {0,...,3} {

\draw[rotate=\a*\th]
	(0.5*\th:\r) -- (1.5*\th:\r)
	(1.5*\th:\r) -- (90:2*\r)
	(0.5*\th:\r) -- (90-\ph:2*\r)
	(90:3*\r) -- (90:4*\r)
	(90+0.5*\th:3*\r) -- (90:4*\r)
;

}

\foreach \a in {1,3,...,7} {

\draw[rotate=\a*\ph]
	(90:2*\r) -- (90+\ph:2*\r)
	(90:3*\r) -- (90+\ph:3*\r)
;

}

\foreach \a in {0,2,...,6} {

\draw[rotate=\a*\ph]
	(90+0.5*\th:2*\r) -- (90:3*\r)
;

	(90:2*\r) -- (90-0.5*\th:\r)
	(90:2*\r) -- (90+\ph:2*\r)
	(90:3*\r) -- (90+\ph:3*\r)
	(90-0.5*\th:3*\r) -- (90:4*\r)
;

}

\foreach \a in {0,...,7} {

\draw[rotate=\a*\ph]
	(90:2*\r) -- (90:3*\r)
;

}

\draw[] (0,0) circle (4*\r);

\foreach \a in {0,...,3} {

\node [shape = circle, fill = black, minimum size = 0.16 cm, inner sep=0pt] at (0.5*\th+\a*\th:\r) {};
\node [shape = circle, fill = black, minimum size = 0.16 cm, inner sep=0pt] at (90+\a*\th: 4*\r) {};

}

\end{scope} 

\begin{scope}[xshift=4.35*\s cm] 

\fill[gray!50] (0,0) circle (4.5*\r);
	
\fill[white] (0,0) circle (4*\r);

\fill[gray!50]
	(0.5*\th:\r) -- (1.5*\th:\r) -- (2.5*\th:\r) -- (3.5*\th:\r) -- cycle
;

\foreach \a in {0,...,3} {

\fill[gray!50, rotate=\a*\th]
	(90:2*\r) -- (90:3*\r) -- (90-0.5*\th:3*\r) -- (90-0.5*\th:2*\r) -- cycle
;

}

\foreach \a in {0,...,3} {

\draw[rotate=\a*\th]
	(0.5*\th:\r) -- (1.5*\th:\r)
	(90:2*\r) -- (0.5*\th:2*\r)
	(90:3*\r) -- (0.5*\th:3*\r)
;

}

\foreach \a in {0,...,7} {

\draw[rotate=\a*\ph]
	(90:2*\r) -- (90:3*\r)
;

}

\foreach \a in {0,1,2} {

\draw[rotate=\a*\th]
	(-0.5*\th:\r) -- (-0.5*\th:2*\r)
	(0:3*\r) -- (0:4*\r)
;

}

\foreach \a in {0,1} {

\draw[rotate=\a*180]
	(2.5*\th:\r) -- (180:2*\r)
;

\draw[rotate=\a*\th]
	(180:2*\r) -- (2.5*\th:2*\r) 
	(0.5*\th:2*\r) -- (0:3*\r)
	(0.5*\th:3*\r) -- (0:4*\r)
	(180:3*\r) -- (2.5*\th:3*\r)
;

}

\draw[]
	(1.5*\th:\r) -- (90:2*\r)
	(2.5*\th:\r) -- (270:2*\r)
	(2.5*\th:3*\r) -- (270:4*\r)
	(-0.5*\th:3*\r) -- (270:4*\r)
;

\draw[] (0,0) circle (4*\r);

\node [shape = circle, fill = black, minimum size = 0.16 cm, inner sep=0pt] at (0.5*\th:\r) {};
\node [shape = circle, fill = black, minimum size = 0.16 cm, inner sep=0pt] at (1.5*\th:\r) {};

\node [shape = circle, fill = black, minimum size = 0.16 cm, inner sep=0pt] at (-0.5*\th:2*\r) {};
\node [shape = circle, fill = black, minimum size = 0.16 cm, inner sep=0pt] at (-0.5*\th:3*\r) {};

\node [shape = circle, fill = black, minimum size = 0.16 cm, inner sep=0pt] at (180:2*\r) {};
\node [shape = circle, fill = black, minimum size = 0.16 cm, inner sep=0pt] at (180:3*\r) {};

\node [shape = circle, fill = black, minimum size = 0.16 cm, inner sep=0pt] at (0:4*\r) {};
\node [shape = circle, fill = black, minimum size = 0.16 cm, inner sep=0pt] at (90:4*\r) {};

\end{scope}

\end{tikzpicture}
\caption{The two tilings with $\alpha\beta^2, \alpha\beta\gamma^2$ and $6$ squares and $16$ rhombi} 
\label{Fig-a4-Tilings-albe2-albega2}
\end{figure}
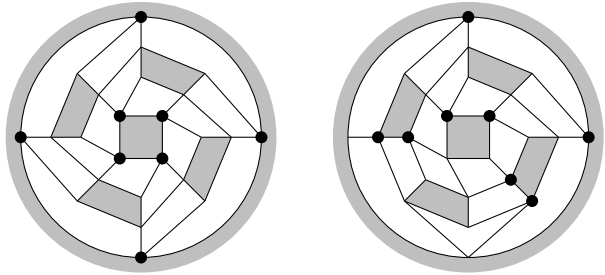

By $\beta=2\gamma$, the rhombus (the first picture of Figure \ref{Fig-a4-albe2-albega2-reg-tri}) in the tiling can be subdivided into two regular triangles along the diagonal between two $\beta$'s. We get a dihedral tiling by regular triangles and squares with $\alpha\gamma^4$ as the only vertex. This tiling is the snub cube in the second picture. Therefore the tilings in Figure \ref{Fig-a4-Tilings-albe2-albega2} are the triangular fusion of the snub cube.

\begin{figure}[h!] 
\centering
\begin{tikzpicture}

\tikzmath{
\s=1;
}

\begin{scope}[yshift=0.35 cm]

\tikzmath{
\r=0.8;
\th=360/3;
\x = \r*cos(0.5*\th);
}

\foreach \a in {0,2} {

\draw[rotate=\a*\th]
	(90:\r) -- (90+\th:\r)
;

\tikzset{shift={(0,-2*\x)}}

\draw[rotate=\a*\th]
	(270:\r) -- (270+\th:\r)
;

}

\draw[gray, dashed]
	(90-\th:\r) --  (90+\th:\r)
;

\node at (-0.5*\r, -1*\x) {\small $\beta$};
\node at (0.5*\r, -1*\x) {\small $\beta$};
\node at (0,0.5*\r) {\small $\gamma$};
\node at (0,-2*\x-0.5*\r) {\small $\gamma$};

\end{scope}

\begin{scope}[xshift=3.5*\s cm] 

\tikzmath{
\r= 0.375;
\th=360/4;
\ph=360/8;
\x=\r*cos(0.5*\th);
}

\fill[gray!50] (0,0) circle (4.35*\r);
	
\fill[white] (0,0) circle (4*\r);

\fill[gray!50]
	(0.5*\th:\r) -- (1.5*\th:\r) -- (2.5*\th:\r) -- (3.5*\th:\r) -- cycle
;

\foreach \a in {0,...,3} {

\fill[gray!50, rotate=\a*\th]
	(90:2*\r) -- (90:3*\r) -- (90-0.5*\th:3*\r) -- (90-0.5*\th:2*\r) -- cycle
;

}

\foreach \a in {0,...,3} {

\draw[rotate=\a*\th]
	(0.5*\th:\r) -- (1.5*\th:\r)
	(1.5*\th:\r) -- (90:2*\r)
	(0.5*\th:\r) -- (90-\ph:2*\r)
	(90:3*\r) -- (90:4*\r)
	(90+0.5*\th:3*\r) -- (90:4*\r)
;


}

\foreach \a in {1,3,...,7} {

\draw[rotate=\a*\ph]
	(90:2*\r) -- (90+\ph:2*\r)
	(90:3*\r) -- (90+\ph:3*\r)
;

}

\foreach \a in {0,2,...,6} {

\draw[rotate=\a*\ph]
	(90+0.5*\th:2*\r) -- (90:3*\r)
;

\draw[
rotate=\a*\ph]
	(90:2*\r) -- (90-0.5*\th:\r)
	(90:2*\r) -- (90+\ph:2*\r)
	(90:3*\r) -- (90+\ph:3*\r)
	(90-0.5*\th:3*\r) -- (90:4*\r)
;

}

\foreach \a in {0,...,7} {

\draw[rotate=\a*\ph]
	(90:2*\r) -- (90:3*\r)
;

}

\draw[] (0,0) circle (4*\r);

\end{scope} 

\end{tikzpicture}
\caption{Rhombus as a union of two regular triangles in the snub cube} \qedhere
\label{Fig-a4-albe2-albega2-reg-tri}
\end{figure}
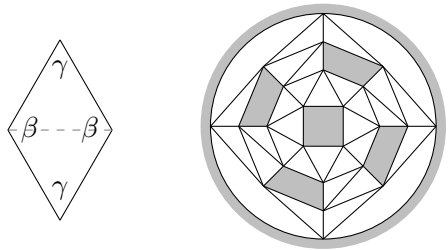

\end{case*}
\end{proof}

\begin{prop} \label{Prop-al2be} The dihedral tilings with vertex $\alpha^2\beta$ are the two tilings in Figure \ref{Fig-a4-Tilings-al2be-al3ga}.
\end{prop}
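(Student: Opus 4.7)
The plan is to mirror the strategy used in Proposition \ref{Prop-albe2}: first squeeze the AVC down to a short list using the angle inequalities \eqref{Eq-Angle-Ineq}, \eqref{Eq-sum-al-be-ga}, Lemma \ref{Lem-albe-alga}, and the equation \eqref{Eq-tan-al-be-ga}; then, once the AVC is $\{\alpha^2\beta,\alpha^3\gamma\}$, use local angle arrangements to propagate the tiling and show exactly two global completions exist.

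I would start by extracting the angle bounds. The vertex $\alpha^2\beta$ is a degree-$3$ vertex, so $2\alpha+\beta=2\pi$; combined with $\beta>\alpha$ this forces $\tfrac{1}{2}\pi<\alpha<\tfrac{2}{3}\pi<\beta<\pi$, and $\beta+\gamma>\pi$ gives $\gamma>2\alpha-\pi$. In particular $R(\beta^2)=2\pi-2\beta<\tfrac{2}{3}\pi<\alpha,\beta$, so any $\beta^2\cdots$ vertex is of the form $\beta^2\gamma^c$; similarly $R(\alpha\beta)=\gamma$ forces $\alpha\beta\cdots=\alpha^2\beta$ or $\alpha\beta\gamma\cdots$. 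By Lemma \ref{Lem-albe-alga}, $\alpha\gamma\cdots$ is a vertex, and the inequality $\alpha>\tfrac{1}{2}\pi$ bounds the number of $\alpha$'s there. I would enumerate the finite list of candidate vertices $\alpha^a\beta^b\gamma^c$ consistent with these bounds (a short list analogous to \eqref{Eq-be3-alga}), then pair each candidate with $\alpha^2\beta$ and substitute into \eqref{Eq-tan-al-be-ga} to solve for $(\alpha,\beta,\gamma,x)$. Each admissible pair will in turn rule out almost all other candidates, as was done in the proofs of Propositions \ref{Prop-be3} and \ref{Prop-albe2}, collapsing the possibilities to a small number of AVCs.

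Next I would eliminate the spurious AVCs case by case using adjacent angle deduction. The main tool is the second picture of Figure \ref{Fig-a4-adj-square-rhombus}: a vertex with $\beta\vert\beta$ forces a $\gamma\vert\gamma$ somewhere, and vice versa. For each competing AVC I expect either an immediate contradiction (two adjacent $\beta$'s or $\gamma$'s appearing in a square) or else a forced local configuration that cannot be extended, following the pattern of the AAD arguments in Figures \ref{Fig-a4-AAD-albe2-algac/al2gac-gagaga-algaal-alal} and \ref{Fig-a4-albe2-sq-diagonal-albegaga}. After this pruning only $\operatorname{AVC}\equiv\{\alpha^2\beta,\alpha^3\gamma\}$ should survive, in accordance with the Main Theorem; the corresponding angle values come from solving $2\alpha+\beta=3\alpha+\gamma=2\pi$ jointly with \eqref{Eq-tan-al-be-ga}.

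With the AVC fixed, I construct the tilings. Since the only vertex containing $\beta$ is $\alpha^2\beta$ and the only vertex containing $\gamma$ is $\alpha^3\gamma$, each rhombus is locally rigid: its two $\beta$'s each complete to $\alpha\vert\beta\vert\alpha$ patches and its two $\gamma$'s each extend into $\alpha^3\gamma$ fans. I would classify the possible angle arrangements around a single square (the arrangements of four $\alpha$'s coming from $\alpha^2\beta$'s and $\alpha^3\gamma$'s around it), show that up to reflection there are exactly two consistent distributions, and propagate each to a complete sphere tiling of $f=14$ tiles; these are the two tilings pictured in Figure \ref{Fig-a4-sporadic-tilings}. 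The main obstacle I anticipate is the second step: the enumeration around $\alpha\gamma\cdots$ is broader than in the earlier propositions because $\alpha\gamma^c$ vertices with several values of $c$ are a priori allowed, and each must be eliminated either numerically through \eqref{Eq-tan-al-be-ga} or combinatorially through an AAD contradiction.
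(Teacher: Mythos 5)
Your plan follows essentially the same route as the paper: rule out $\beta^2\gamma$ against $\alpha^2\beta$ via \eqref{Eq-tan-al-be-ga}, use the resulting absence of $\beta\vert\beta$ (hence of $\gamma\vert\gamma$) to prune the vertex list down to $\alpha^2\beta, \alpha^3\gamma, \alpha^3\gamma^2, \alpha^3\gamma^3, \alpha^2\gamma^2, \alpha\beta\gamma^2$, pin the angles numerically, and propagate the local arrangements to exactly two $f=14$ tilings; the only organizational difference is that the paper eliminates the four extra vertices in one stroke (a $\gamma\vert\alpha\vert\gamma$ at any of them forces $\alpha^3\gamma$ to be a vertex, whose angle values then exclude them all) rather than by your case-by-case treatment. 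One small correction: your chain $R(\beta^2)<\tfrac{2}{3}\pi<\alpha$ is backwards, since here $\alpha<\tfrac{2}{3}\pi$; the bounds you actually need are $R(\beta^2)=2\pi-2\beta<\alpha$ (from $2\beta+\alpha>2\alpha+\beta=2\pi$) and $R(\beta^2)<2\gamma$ (from $\beta+\gamma>\pi$), which give $\beta^2\cdots=\beta^2\gamma$ exactly rather than $\beta^2\gamma^c$.
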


Each of these tilings has $10$ squares and $4$ rhombi. These are the tilings in Figure \ref{Fig-a4-sporadic-tilings}.

\begin{proof} By $\beta>\alpha$ and $\alpha^2\beta$, we have $\beta>\frac{2}{3}\pi>\alpha$ and $R(\beta^2)<\alpha$. By $\beta+\gamma>\pi$, we have $R(\beta^2)<2\gamma$. Then by $\beta>\alpha>\gamma$, we get $\beta^2\cdots=\beta^2\gamma$.

Suppose both $\alpha^2\beta, \beta^2\gamma$ are vertices. Then we get
\begin{align*}
\beta =2\pi - 2\alpha, \quad
\gamma = 4\alpha- 2\pi.
\end{align*}
Substituting the above into \eqref{Eq-tan-al-be-ga}, we get
\begin{align*}
\tan^2 \tfrac{1}{2}\alpha = - \tan \alpha \tan 2\alpha = - \frac{8\tan^2 \tfrac{1}{2}\alpha }{ \tan^4 \tfrac{1}{2}\alpha - 6 \tan^2 \tfrac{1}{2}\alpha + 1 },
\end{align*}
which gives
\begin{align*}
\tan \tfrac{1}{2}\alpha = 0, \pm \sqrt{3}.
\end{align*}
Hence $\alpha = 0, \tfrac{2}{3}\pi, \tfrac{4}{3}\pi$, which contradict $\beta> \tfrac{2}{3}\pi > \alpha > \frac{1}{2}\pi$. Therefore $\beta^2\cdots=\beta^2\gamma$ is not a vertex. Combined with Propositions \ref{Prop-al3}, \ref{Prop-albega}, \ref{Prop-be3}, \ref{Prop-albe2}, we may assume that $\alpha^2\beta$ is the only degree $3$ vertex.

By $\alpha^2\beta$ and $\beta>\alpha$, we get $\beta>\frac{2}{3}\pi$. Then by $\alpha>\frac{1}{2}\pi$ the vertices are 
\begin{align*}
\alpha^2\beta, \gamma^c, \alpha^{a\le3}\gamma^c, \beta\gamma^c, \alpha\beta\gamma^c.
\end{align*}

By no $\beta^2\cdots$, we know that $\beta\vert\beta\cdots$ is not a vertex and therefore $\gamma\vert\gamma\cdots$ is not a vertex (as shown in Figure \ref{Fig-a4-adj-square-rhombus}). This implies that $\gamma^c, \alpha\gamma^c, \beta\gamma^c$ are not vertices, and $c\le3$ in $\alpha^3\gamma^c$, and $c\le2$ in $\alpha^2\gamma^c, \alpha\beta\gamma^c$, and $\alpha\vert\beta\cdots=\alpha^2\beta$. Hence the vertices are
\begin{align*}
\alpha^2\beta, \alpha^{3}\gamma, \alpha^{3}\gamma^2, \alpha^{3}\gamma^3, \alpha^{2}\gamma^2, \alpha\beta\gamma^2.
\end{align*}

Assume one of $\alpha^2\gamma^2, \alpha\beta\gamma^2, \alpha^3\gamma^2, \alpha^3\gamma^3$ is a vertex. By no $\gamma\vert\gamma\cdots$, there is $\gamma\vert\alpha\vert\gamma$ at the vertex, which determines tiles $T_1, T_2, T_3$ in Figure \ref{Fig-a4-AAD-al2be-gaalga}. By $\alpha_2\vert\beta_1\cdots, \alpha_2\vert\beta_3\cdots=\alpha^2\beta$, we further get $T_4, T_5$. This gives $\alpha_4 \vert \alpha_2 \vert \alpha_5 \cdots$. Combined with no $\gamma\vert\gamma\cdots$, we know that $\alpha^3\gamma$ is a vertex.

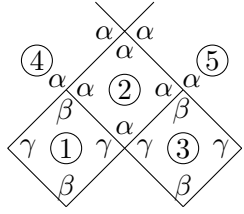
\begin{figure}[h!] 
\centering
\begin{tikzpicture}

\tikzmath{
\s=1;
\r=1.1;
\gon=4;
\th=360/\gon;
\x=\r*cos(\th/2);
}

\begin{scope}

\foreach \a in {0,...,3} {
\draw[rotate=\th*\a] 
	(0:0) -- (0.5*\th:\r)
; 
}

\foreach \a in {0,...,2} {
\draw[rotate=\th*\a] 
	(0.5*\th:\r) -- (0:2*\x)
	(-0.5*\th:\r) -- (0:2*\x)
;
}

\draw[]
	(90:2*\x) -- ([shift=(90:2*\x)]45:0.5*\r)
	(90:2*\x) -- ([shift=(90:2*\x)]135:0.5*\r)
;

\node at (180:0.35*\x) {\small $\gamma$}; 
\node at (180:1.65*\x) {\small $\gamma$};
\node at (1.65*\th:0.85*\r) {\small $\beta$};
\node at (2.375*\th:0.85*\r) {\small $\beta$};

\node at (90:0.35*\x) {\small $\alpha$}; 
\node at (90:1.65*\x) {\small $\alpha$}; 
\node at (0.625*\th:0.85*\r) {\small $\alpha$}; 
\node at (1.375*\th:0.85*\r) {\small $\alpha$};

\node at (0:0.35*\x) {\small $\gamma$}; 
\node at (0:1.65*\x) {\small $\gamma$};
\node at (0.375*\th:0.825*\r) {\small $\beta$};
\node at (-0.375*\th:0.85*\r) {\small $\beta$};

\node at (100:2*\x) {\small $\alpha$}; 
\node at (1.5*\th:1.15*\r) {\small $\alpha$};

\node at (80:2*\x) {\small $\alpha$}; 
\node at (0.5*\th:1.15*\r) {\small $\alpha$};

\node[inner sep=1,draw,shape=circle] at (180:1*\x) {\small $1$};
\node[inner sep=1,draw,shape=circle] at (90:1*\x) {\small $2$};
\node[inner sep=1,draw,shape=circle] at (0:1*\x) {\small $3$};
\node[inner sep=1,draw,shape=circle] at (1.5*\th:1.5*\r) {\small $4$};
\node[inner sep=1,draw,shape=circle] at (0.5*\th:1.5*\r) {\small $5$};

\end{scope}

\end{tikzpicture}
\caption{The arrangement of $\gamma\vert\alpha\vert\gamma$}
\label{Fig-a4-AAD-al2be-gaalga}
\end{figure}

By $\alpha^2\beta, \alpha^3\gamma$ and \eqref{Eq-tan-al-be-ga}, we have
\begin{align*}
\alpha \approx 0.58043\pi, \quad
\beta \approx 0.83914\pi, \quad
\gamma \approx 0.25871\pi, \quad
x \approx 0.29517\pi.
\end{align*}
This implies that $\alpha^2\gamma^2, \alpha\beta\gamma^2, \alpha^3\gamma^2, \alpha^3\gamma^3$ are not verices. Hence we get
\begin{align}\label{Eq-AVC-al2be-al3ga}
\AVC = \{ \alpha^2\beta, \alpha^3\gamma \}.
\end{align}
From the above, we know $\beta\cdots=\alpha^2\beta$ and $\gamma\cdots=\alpha^3\gamma$.

The arrangement of $\alpha^3\gamma$ determines tiles $T_1, T_2, T_3$ in Figure \ref{Fig-a4-AAD-al2be-V-form}. We know $\alpha_1\alpha_2\cdots=\alpha^2\beta, \alpha^3\gamma$. If the undetermined $\alpha_1\alpha_2\cdots=\alpha^2\beta$, then we determine $T_5$ in the first picture. Then $\alpha_2\gamma_5\cdots=\alpha^3\gamma$ determines $T_4$. If the undetermined $\alpha_1\alpha_2\cdots=\alpha^3\gamma$ as configured in the second picture, then we determine $T_5$. Then $\alpha_2\beta_5\cdots=\alpha^2\beta$ determines $T_4$. We further get the undetermined $\alpha_2\alpha_3\cdots=\alpha^3\gamma$. If the undetermined $\alpha_1\alpha_2\cdots=\alpha^3\gamma$ is not as configured in the second picture, then after a horizontal flip, the situation is described by the $\alpha_2\alpha_3\cdots=\alpha^3\gamma$ as arranged. In each situation, we know that there are four squares in the configuration of $T_1, T_2, T_3, T_4$.

\begin{figure}[h!] 
\centering
\begin{tikzpicture}

\tikzmath{
\s=1;
\r=1.2;
\th=360/4;
\x=\r*cos(0.5*\th);
\R = sqrt(\x^2+(3*\x)^2);
\aR = acos(3*\x/\R);
}

\begin{scope}[] 

\foreach \a in {0,...,3} {

\draw[rotate=\th*\a]
	(0:0) -- (90-0.5*\th:\r)
;

}

\foreach \a in {1,...,3} {

\draw[rotate=\th*\a]
	(90:2*\x) -- (90-0.5*\th:\r)
	(90:2*\x) -- (90+0.5*\th:\r)
;

}

\foreach \aa in {0,2,3} {

\tikzset{shift={(\th*\aa:\x)}}

\foreach \a in {0,...,3} {

\node at (\th+\th*\a: 0.45*\r) {\small $\alpha$}; 

}
}

\foreach \a in {0,...,3} {

\tikzset{shift={(\x, -2*\x)}}

\draw[rotate=\th*\a]
	(\th:\x) -- (\th+\th:\x)
;

}

\draw[]
	(-2*\x, 0) -- (-2*\x, -2*\x)
	(-2*\x, -2*\x) -- (0, -2*\x)
	%
;

	(-\x, \x) -- (-\x, 2*\x) 
;

\node at (90:0.3*\x) {\small $\gamma$};

\node at (-1.15*\x, -1.15*\x) {\small $\beta$}; 
\node at (-1.8*\x, -0.5*\x) {\small $\gamma$};
\node at (-0.5*\x, -1.8*\x) {\small $\gamma$};
\node at (-1.8*\x, -1.8*\x) {\small $\beta$};



\node at (\x, -1.35*\x) {\small $\alpha$}; 
\node at (0.35*\x, -2*\x) {\small $\alpha$};
\node at (1.65*\x, -2*\x) {\small $\alpha$};
\node at (\x, -2.65*\x) {\small $\alpha$};

\node at (-0.1*\x, -2.25*\x) {\small $\alpha$};

\node[inner sep=1,draw,shape=circle] at (180:\x) {\small $1$};
\node[inner sep=1,draw,shape=circle] at (270:\x) {\small $2$};
\node[inner sep=1,draw,shape=circle] at (0:\x) {\small $3$};
\node[inner sep=1,draw,shape=circle] at (-1.475*\x,-1.475*\x) {\small $5$};
\node[inner sep=1,draw,shape=circle] at (\x,-2*\x) {\small $4$};

\end{scope} 

\begin{scope}[xshift=5*\s cm] 

\foreach \a in {0,...,3} {

\draw[rotate=\th*\a]
	(0:0) -- (90-0.5*\th:\r)
;

}

\foreach \a in {1,...,3} {

\draw[rotate=\th*\a]
	(90:2*\x) -- (90-0.5*\th:\r)
	(90:2*\x) -- (90+0.5*\th:\r)
;

}

\foreach \aa in {0,2,3} {

\tikzset{shift={(\th*\aa:\x)}}

\foreach \a in {0,...,3} {

\node at (\th+\th*\a: 0.45*\r) {\small $\alpha$}; 

}
}

\foreach \a in {0,...,3} {

\tikzset{shift={(\x, -2*\x)}}

\draw[rotate=\th*\a]
	(\th:\x) -- (\th+\th:\x)
;

}

\draw[]
	(-\x, -\x) -- (-\x, -3*\x)
	(-\x, -3*\x) -- (\x, -3*\x)
	%
;

\node at (90:0.3*\x) {\small $\gamma$};

\node at (-1.25*\x, -1.1*\x) {\small $\alpha$};

\node at (1.3*\x, -1.05*\x) {\small $\gamma$};

\node at (-0.15*\x, -2.15*\x) {\small $\beta$}; 
\node at (-0.8*\x, -1.5*\x) {\small $\gamma$};
\node at (0.5*\x, -2.8*\x) {\small $\gamma$};
\node at (-0.8*\x, -2.8*\x) {\small $\beta$};

\node at (\x, -1.35*\x) {\small $\alpha$}; 
\node at (0.35*\x, -2*\x) {\small $\alpha$};
\node at (1.65*\x, -2*\x) {\small $\alpha$};
\node at (\x, -2.65*\x) {\small $\alpha$};


\node[inner sep=1,draw,shape=circle] at (180:\x) {\small $1$};
\node[inner sep=1,draw,shape=circle] at (270:\x) {\small $2$};
\node[inner sep=1,draw,shape=circle] at (0:\x) {\small $3$};
\node[inner sep=1,draw,shape=circle] at (-0.475*\x,-2.475*\x) {\small $5$};
\node[inner sep=1,draw,shape=circle] at (\x,-2*\x) {\small $4$};

\end{scope}

\end{tikzpicture}
\caption{The arrangement of $\alpha^3\gamma$}
\label{Fig-a4-AAD-al2be-V-form}
\end{figure}

For $T_1, T_2, T_3, T_4$ in Figure \ref{Fig-a4-AAD-al2be-V-form}, there are two possibilities, a square or a rhombus, for the adjacent tile along the remaining edge of $T_2$. If it is a square, then we determine $T_5$ adjacent to $T_2$ in the first picture of Figure \ref{Fig-a4-Tilings-al2be-al3ga}. Then $\alpha_1\alpha_2\alpha_3\cdots, \alpha_1\alpha_2\alpha_5\cdots, \alpha_2\alpha_4\alpha_5\cdots, \alpha_2\alpha_3\alpha_4\cdots =\alpha^3\gamma$ determine $T_6, T_7, T_8, T_9$ respectively. Then $\alpha_1\beta_6\cdots, \alpha_1\beta_7\cdots=\alpha^2\beta$ determine $T_{10}$. By rotational symmetry, we also determine $T_{11}, T_{12}, T_{13}$. By $\alpha_{10}\alpha_{13}\gamma_6\cdots=\alpha^3\gamma$, we determine $T_{14}$ and hence a tiling with $10$ squares and $4$ rhombi in the first picture.


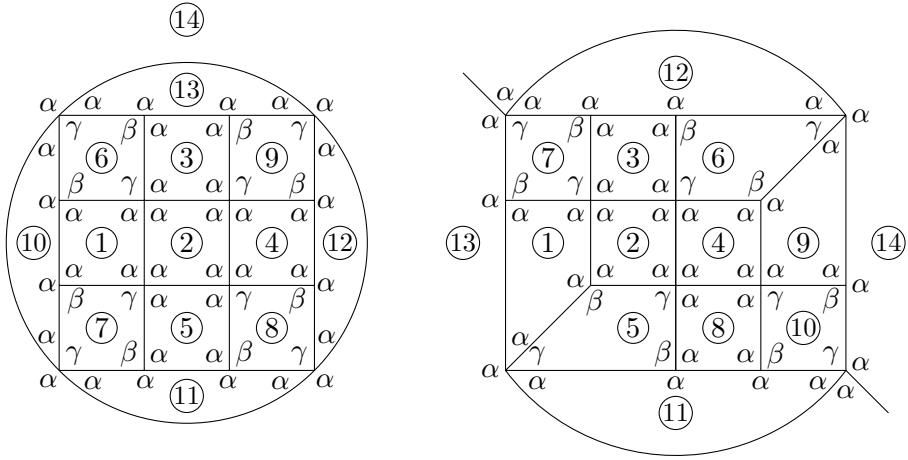
\begin{figure}[h!] 
\centering
\begin{tikzpicture}

\tikzmath{
\s=1;
\r=0.8;
\th=360/4;
\x=\r*cos(0.5*\th);
\R = sqrt(\x^2 + (3*\x)^2);
\aR = acos(3*\x/\R);
\RR = sqrt((2*\x)^2+(4*\x)^2);
\aRR = acos(4*\x/\RR);
\RRR=sqrt( (3*\x)^2 + (4*\x)^2 );
\aRRR=acos(3*\x/\RRR);
}

\begin{scope}[] 

\foreach \a in {0,...,3}{

\draw[rotate=\th*\a]
	(0.5*\th:\r) -- (1.5*\th:\r)
	(\x, \x) -- (\x, 3*\x)
	(-\x, \x) -- (-\x, 3*\x)
	(-\x, 3*\x) -- (\x, 3*\x)
	(-\x, 3*\x) -- (-3*\x, 3*\x) 
	(\x, 3*\x) -- (3*\x, 3*\x) 
	%
;

\node at (45+\th*\a:0.65*\r) {\small $\alpha$}; 

\node at (45+\th*\a:1.35*\r) {\small $\gamma$}; 
\node at (45+\th*\a:2.65*\r) {\small $\gamma$};
\node at (62.5+\th*\a:2.075*\r) {\small $\beta$};
\node at (27.5+\th*\a:2.075*\r) {\small $\beta$};

\node at (90-0.9*\aR+\th*\a: 1.08*\R) {\small $\alpha$}; 
\node at (0.625*\th+\th*\a: 2.8*\r) {\small $\alpha$};
\node at (90+0.9*\aR+\th*\a: 1.08*\R) {\small $\alpha$};
\node at (1.375*\th+\th*\a: 2.8*\r) {\small $\alpha$};

\node at (45+\th*\a:3.25*\r) {\small $\alpha$}; 

}

\foreach \aa in {0,...,3} { 

\tikzset{shift={(90+\th*\aa:2*\x)}}

\foreach \a in {0,...,3}{

\node at (45+\th*\a:0.65*\r) {\small $\alpha$};

}

}

\draw (0,0) circle (3*\r);

\node[inner sep=1,draw,shape=circle] at (0,2*\x) {\small $3$};
\node[inner sep=1,draw,shape=circle] at (-2*\x,0) {\small $1$};
\node[inner sep=1,draw,shape=circle] at (0,0) {\small $2$};
\node[inner sep=1,draw,shape=circle] at (2*\x,0) {\small $4$};
\node[inner sep=1,draw,shape=circle] at (0,-2*\x) {\small $5$};

\node[inner sep=1,draw,shape=circle] at (0.5*\th: 2*\r) {\small $9$};
\node[inner sep=1,draw,shape=circle] at (1.5*\th: 2*\r) {\small $6$};
\node[inner sep=1,draw,shape=circle] at (2.5*\th: 2*\r) {\small $7$};
\node[inner sep=1,draw,shape=circle] at (3.5*\th: 2*\r) {\small $8$};

\node[inner sep=0.25,draw,shape=circle] at (0,3.6*\x) {\footnotesize $13$};
\node[inner sep=0.25,draw,shape=circle] at (-3.6*\x,0) {\footnotesize $10$};
\node[inner sep=0.25,draw,shape=circle] at (0,-3.6*\x) {\footnotesize $11$};
\node[inner sep=0.25,draw,shape=circle] at (3.6*\x,0) {\footnotesize $12$};

\node[inner sep=0.25,draw,shape=circle] at (0,5.25*\x) {\footnotesize $14$};

\end{scope}

\begin{scope}[xshift=6.5*\s cm] 

\foreach \aa in {-1,1} {

\tikzset{shift={(\aa*\x,0)}}

\foreach \a in {0,...,3}{

\draw[rotate=\th*\a]
	(0.5*\th:\r) -- (1.5*\th:\r)
;

\node at (0.5*\th+\th*\a: 0.65*\r) {\small $\alpha$}; 

}

}

\foreach \aa in {0,1} {

\draw[rotate=2*\aa*\th]
	(0, \x) -- (0, 3*\x) 
	(0, 3*\x) -- (4*\x, 3*\x)
	(2*\x, \x) -- (4*\x, 3*\x)
	(2*\x, -\x) -- (4*\x, -\x)
	(4*\x, -\x) -- (4*\x, 3*\x)
	(0, -\x) -- (0, -3*\x) 
	(2*\x, -\x) -- (2*\x, -3*\x)
	(0, -3*\x) -- (2*\x, -3*\x)
	(2*\x, -3*\x) -- (4*\x, -3*\x)
	(4*\x, -\x) -- (4*\x, -3*\x)
	(4*\x, -3*\x) --  (5*\x, -4*\x)
	(4*\x, 3*\x) arc (90-\aRRR:90+\aRRR:\RRR)
;

\foreach \a in {0,...,3} {

\tikzset{rotate=2*\aa*\th, shift={(\x,-2*\x)}}

\node at (0.5*\th+\th*\a: 0.65*\r) {\small $\alpha$}; 
	
}

\foreach \a in {0,...,1} {

\tikzset{rotate=2*\aa*\th, shift={(3*\x,0)}}

\node at (2.5*\th+\th*\a: 0.65*\r) {\small $\alpha$}; 

}

\foreach \a in {0,1} {

\tikzset{rotate=2*\aa*\th, shift={(3*\x,-2*\x)}}

\node at (1.5*\th+2*\th*\a: 0.65*\r) {\small $\gamma$}; 
\node at (0.5*\th+2*\th*\a: 0.65*\r) {\small $\beta$}; 

}

\tikzset{rotate=2*\aa*\th} {

\node at (2.35*\x, 0.9*\x) {\small $\alpha$};
\node at (3.65*\x, 2.25*\x) {\small $\alpha$};

\node at (0.3*\x,2.6*\x) {\small $\beta$};
\node at (1.9*\x,1.4*\x) {\small $\beta$};
\node at (0.3*\x,1.35*\x) {\small $\gamma$};
\node at (3.25*\x,2.65*\x) {\small $\gamma$};

\node at (0, 3.3*\x) {\small $\alpha$};
\node at (-2*\x, 3.3*\x) {\small $\alpha$};
\node at (3.25*\x, 3.3*\x) {\small $\alpha$};
\node at (-3.35*\x, 3.3*\x) {\small $\alpha$};

\node at (4.35*\x, -\x) {\small $\alpha$};
\node at (4.35*\x, 3*\x) {\small $\alpha$};
\node at (-4*\x, 3.5*\x) {\small $\alpha$};
\node at (-4.35*\x, 2.85*\x) {\small $\alpha$};

}

}

\node[inner sep=1,draw,shape=circle] at (-3*\x,0) {\small $1$};
\node[inner sep=1,draw,shape=circle] at (-\x,0) {\small $2$};
\node[inner sep=1,draw,shape=circle] at (-\x,2*\x) {\small $3$};
\node[inner sep=1,draw,shape=circle] at (\x,0) {\small $4$};
\node[inner sep=1,draw,shape=circle] at (-\x,-2*\x) {\small $5$};
\node[inner sep=1,draw,shape=circle] at (\x,2*\x) {\small $6$};
\node[inner sep=1,draw,shape=circle] at (-3*\x,2*\x) {\small $7$};
\node[inner sep=1,draw,shape=circle] at (\x,-2*\x) {\small $8$};
\node[inner sep=1,draw,shape=circle] at (3*\x,0) {\small $9$};
\node[inner sep=0.25,draw,shape=circle] at (3*\x,-2*\x) {\footnotesize $10$};
\node[inner sep=0.25,draw,shape=circle] at (0,-4*\x) {\footnotesize $11$};
\node[inner sep=0.25,draw,shape=circle] at (0,4*\x) {\footnotesize $12$};
\node[inner sep=0.25,draw,shape=circle] at (-5*\x,0) {\footnotesize $13$};
\node[inner sep=0.25,draw,shape=circle] at (5*\x,0) {\footnotesize $14$};

\end{scope} 

\end{tikzpicture}
\caption{The two tilings with $ \alpha^2\beta, \alpha^3\gamma $ and $10$ squares and $4$ rhombi}
\label{Fig-a4-Tilings-al2be-al3ga}
\end{figure}

If it is a rhombus adjacent to $T_2$ in Figure \ref{Fig-a4-Tilings-al2be-al3ga} along the remaining edge, then we determine $T_5$ in the second picture. Apply the similar argument, we determine another tiling the same vertices and the same numbers of tiles in the second picture.
\end{proof}

\begin{prop}\label{Prop-be2ga} The dihedral tilings with vertex $\beta^2\gamma$ are an infinite family of tilings in Figure \ref{Fig-a4-Tilings-be2ga-albega2}.
\end{prop}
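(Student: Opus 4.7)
The plan is to narrow the set of possible vertices to $\{\beta^2\gamma, \alpha\beta\gamma^c\}$ and then build the tilings directly from that AVC. First, from the vertex equation $2\beta + \gamma = 2\pi$ associated with $\beta^2\gamma$, I would observe that $R(\beta^2) = \gamma$, so any $\beta^2$-containing vertex is $\beta^2\gamma$ itself. Combined with $\beta > \alpha > \gamma = 2\pi - 2\beta$, the resulting inequality $\alpha + 2\beta > 2\pi$ immediately excludes $\alpha\beta^2\cdots$. An analogous argument using \eqref{Eq-tan-al-be-ga} together with $\beta+\gamma > \pi$ should give enough control over $\alpha$ in terms of $\beta,\gamma$ to exclude $\alpha^2\beta\cdots$ as a vertex, or at worst force it into a narrow window that collides with Propositions \ref{Prop-al3}--\ref{Prop-al2be}.

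Next, Propositions \ref{Prop-al3}--\ref{Prop-al2be} let me assume $\beta^2\gamma$ is the only degree-$3$ vertex. Lemma \ref{Lem-albe-alga} then forces $\alpha\beta\cdots$ to appear as a higher-degree vertex, and by the previous reduction it must be of the form $\alpha\beta\gamma^c$ for a uniquely determined $c \ge 2$. The three equations $2\beta + \gamma = 2\pi$, $\alpha + \beta + c\gamma = 2\pi$, and \eqref{Eq-tan-al-be-ga} pin down the three angles as specific functions of $c$; those numerical values should rule out any purely $\alpha$-$\gamma$ vertex $\alpha^a\gamma^k$, since for each candidate pair $(a,k)$ the resulting angle equation is inconsistent with \eqref{Eq-tan-al-be-ga}. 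This produces the preliminary list $\AVC \subseteq \{\beta^2\gamma, \beta\gamma^{c'}, \alpha\beta\gamma^c\}$. To remove $\beta\gamma^{c'}$ I would either apply the Counting Lemma \ref{Lem-Counting} (observing that $\beta^2\gamma$ has more $\beta$'s than $\gamma$'s, and the imbalance can only be compensated by $\alpha\beta\gamma^c$ with $c \ge 2$ when $\beta\gamma^{c'}$ is absent), or do a short local argument at a hypothetical $\beta\gamma^{c'}$ showing that its neighbouring rhombi propagate a $\beta^2$-vertex incompatible with the list.

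For the construction, I would start at a $\beta^2\gamma$ vertex and iteratively attach rhombi; each step is forced by the AVC, producing the strip $\mathcal{T}$ of $2c-1$ rhombi with $\gamma^c$ at one end and $\gamma^{c-1}$ at the other, as in Figure \ref{Fig-a4-families-earth-map-tilings}. The two squares serve as the \quotes{poles} and supply the $\alpha$'s that complete the $\alpha\beta\gamma^c$ vertices at the ends of the strips; four copies of $\mathcal{T}$ together with the two squares close up on the sphere, giving $f = 2 + 4(2c - 1) = 8c - 2$ tiles. The main obstacle will be closing up the configuration globally: one must verify that four strips is exactly the correct multiplicity (not two or six), and that the alternation of $\beta\vert\beta$ and $\gamma\vert\gamma$ across adjacent rhombi (Figure \ref{Fig-a4-adj-square-rhombus}) forces a consistent matching between the $\gamma^c$-end of one strip and the $\gamma^{c-1}$-end of its neighbour, so that the whole equatorial band closes after exactly four copies of $\mathcal{T}$.
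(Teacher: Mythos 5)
Your outline follows the paper's broad strategy (cut the vertex list down to an AVC, then build the tiling), but two of your steps would fail as described. The first is the numerical exclusion of the $\alpha$--$\gamma$ vertices. From $2\beta+\gamma=2\pi$ and $\alpha+\beta+c\gamma=2\pi$ one gets $\beta=\pi-\tfrac{1}{2}\gamma$ and $\alpha=\pi-(c-\tfrac{1}{2})\gamma$, so $2\alpha+(2c-1)\gamma=2\bigl(\pi-(c-\tfrac{1}{2})\gamma\bigr)+(2c-1)\gamma=2\pi$ holds identically: the candidate $\alpha^2\gamma^{2c-1}$ passes the angle-sum test for \emph{every} $c$, and \eqref{Eq-tan-al-be-ga} gives no further obstruction because it has already been spent fixing $\gamma$. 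The cases $a=1,3$ and the pure vertex $\gamma^k$ reduce to asking whether $\beta$ (respectively $2\pi$) is an integer multiple of $\gamma_c$, which cannot be settled by inspecting numerical values uniformly in $c$. The paper avoids all of this combinatorially: the adjacent-angle deduction of Figure \ref{Fig-a4-AAD-be2ga-alga} shows that any vertex containing $\alpha\vert\gamma$ must be $\alpha\beta\gamma^c$, which kills $\alpha^a\gamma^k$ (some $\alpha$ must abut a $\gamma$ there, yet the vertex would then contain a $\beta$), and a separate propagation argument at $\gamma\vert\gamma$ shows that $\gamma^k$ would force a monohedral earth map tiling. Relatedly, the Counting Lemma cannot be invoked the way you suggest, since its hypothesis (no vertex with more $\beta$'s than $\gamma$'s) is violated by $\beta^2\gamma$ itself; in fact the paper never excludes $\beta\gamma^{c'}$ a priori --- it stays in the AVC \eqref{Eq-AVC-be2ga-begac-albegac}, and one only observes afterwards that the construction, forced by starting at a square whose four vertices must all be $\alpha\beta\gamma^c$, never produces it.

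The second gap is that you treat geometric existence as automatic (``the three equations pin down the three angles''), whereas proving that for every integer $c\ge2$ there is a $\gamma\in(0,\tfrac{1}{2}\pi)$ satisfying \eqref{Eq-tan-al-be-ga} together with the two vertex equations, the inequalities $\pi>\beta>\alpha>\tfrac{1}{2}\pi>\gamma$, and $0<\cos x<1$ occupies roughly half of the paper's proof: one solves the resulting trigonometric equation for $c$ as a function of $\gamma$, checks monotonicity and the endpoint values, and applies the Intermediate Value Theorem. Without this, the assertion that an infinite family of tilings exists is unproved. By contrast, your worry about the global closing-up (``four strips, not two or six'') is not a real issue: the copies of $\mathcal{T}$ are indexed by the four vertices of the starting square, so the multiplicity four is automatic once the local structure around the square is forced.
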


Each member of the infinite family has $2$ squares and $4(2c-1)$ rhombi, where $c\ge2$ is the number of $\gamma$'s in the vertex $\alpha\beta\gamma^c$. They are illustrated in Figure \ref{Fig-a4-families-earth-map-tilings}.

\begin{proof} By Proposition \ref{Prop-al2be}, we know that $\alpha^2\beta$ is not a vertex. By $\beta^2\gamma$ and $\beta>\alpha>\gamma$, we get $\beta^2\cdots=\beta^2\gamma$ and $\beta>\frac{2}{3}\pi$. Combined with $\alpha>\frac{1}{2}\pi$, we get the vertices below,
\begin{align*}
\beta^2\gamma, \gamma^c, \alpha^{a\le3}\gamma^c, \beta\gamma^c, \alpha\beta\gamma^c .
\end{align*}
From the above, we have $\alpha\beta\cdots = \alpha\beta\gamma^c$. Lemma \ref{Lem-albe-alga} implies that $\alpha\beta\cdots$ is a vertex. This means that $\alpha\beta\gamma^c$ is a vertex.

We first show that $\gamma^c, \alpha^a\gamma^c$ are not vertices for dihedral tilings. 

The arrangement of $\alpha\vert\gamma\cdots$ determines tiles $T_1, T_2$ in Figure \ref{Fig-a4-AAD-be2ga-alga}. Then $\alpha_1\beta_2\cdots=\alpha\beta\gamma^c$ determines $T_3$. Repeating the same argument we further determine $T_4,T_5$. Hence $\alpha\vert\gamma\cdots = \beta \vert \alpha \vert \gamma\cdots =\alpha\beta\gamma^c$. This further implies that $\alpha^a\gamma^c$ is not a vertex.

\begin{figure}[h!] 
\centering
\begin{tikzpicture}

\tikzmath{
\s=1;
\r=0.8;
\th=360/4;
\x=\r*cos(0.5*\th);
\R = sqrt(\x^2+(3*\x)^2);
\aR = acos(3*\x/\R);
}

\begin{scope}[] 

\foreach \aa in {0,...,3} {

\tikzset{shift={(\th+\aa*\th:2*\x)}}

\foreach \a in {0,...,3} {

\draw[rotate=\th*\a]
	(0.5*\th:\r) -- (1.5*\th:\r)
;

}

}

\foreach \a in {0,...,3} {

\node at (0.5*\th+\th*\a: 0.65*\r) {\small $\alpha$}; 

}

\foreach \aa in {1,3} {

\tikzset{shift={(\th+\aa*\th:2*\x)}}

\foreach \a in {0,2} {

\node at (0.5*\th+\th*\a: 0.65*\r) {\small $\gamma$}; 
\node at (1.5*\th+\th*\a: 0.65*\r) {\small $\beta$}; 

}

}

\foreach \aa in {0,2} {

\tikzset{shift={(\th+\aa*\th:2*\x)}}

\foreach \a in {0,2} {

\node at (0.5*\th+\th*\a: 0.65*\r) {\small $\beta$}; 
\node at (1.5*\th+\th*\a: 0.65*\r) {\small $\gamma$}; 

}

}

\node at (1.5*\x, 1.65*\x) {\small $\ddots$};
\node at (-1.5*\x, 1.65*\x) {\small $\iddots$};
\node at (-1.5*\x, -1.25*\x) {\small $\ddots$};
\node at (1.5*\x, -1.25*\x) {\small $\iddots$};

\node[inner sep=1,draw,shape=circle] at (0,0) {\small $1$};
\node[inner sep=1,draw,shape=circle] at (2*\x,0) {\small $2$};
\node[inner sep=1,draw,shape=circle] at (0,2*\x) {\small $3$};
\node[inner sep=1,draw,shape=circle] at (-2*\x,0) {\small $4$};
\node[inner sep=1,draw,shape=circle] at (0,-2*\x) {\small $5$};

\end{scope}

\end{tikzpicture}
\caption{The arrangement of $\alpha\vert\gamma$}
\label{Fig-a4-AAD-be2ga-alga}
\end{figure}

If $\gamma^c$ is a vertex, then its $\gamma\vert\gamma$ determines $T_1, T_2$ in Figure \ref{Fig-a4-be2-gac-Timezones}. By $\beta^2\cdots=\beta^2\gamma$, we also get $T_3$. This process continues at $\gamma^c$ and we determine a monohedral earth map tiling for each fixed $c\ge3$. Further details about the earth map tilings can be seen in \cite{cly}. Hence $\gamma^c$ is not a vertex for dihedral tilings.

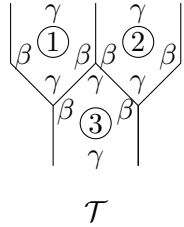
\begin{figure}[h!]
\centering
\begin{tikzpicture}[>=latex,scale=1]

\tikzmath{ 
\s=1;
\r=0.8;
\x = sqrt(2*\r^2);
\y = sqrt(\r^2 - (\x/2)^2);
\yy = 2*\r + \y;
\l=2*\r*sin(67.5);
\L=sqrt(\r^2+\l^2-2*\r*\l*cos(135+22.5));
\A=acos((\L^2+\l^2-\r^2)/(2*\L*\l));
}

\begin{scope}[] 

	(\x,0) -- (\x,\r) -- (90-22.5:\l) -- (90-22.5+\A:\L) -- (-0.5*\x, 2*\r+\y) -- (-0.5*\x, \r+\y) -- (0*\x, \r) -- (0,0) -- (\x, 0)
;

\tikzmath{ 
\tz=1;
\tzz=\tz-1;
\tzzz=\tzz-1;
}

\draw[]
	(90:\r) -- (90+22.5:\l)
	(90+22.5:\l) -- (90+22.5-\A:\L)
	(1*\x,0) -- (1*\x, \r)
;

\foreach \a in {0,...,\tz} {

\draw[xshift=\x*\a cm]
	(0:0) -- (90:\r)
	(90:\r) -- (90-22.5:\l)
	(90-22.5:\l) -- (90-22.5+\A:\L)
;
	
}

\foreach \a in {0,...,\tzz} {

\draw[xshift=\x*\a cm]
	(90-22.5:\l) -- (\x,\r)
;

}

\foreach \c in {-1,0,...,\tzz} {

\node at (1*\x+\c*\x, 2*\r + 0.75*\y) {\small $\gamma$}; 
\node at (1*\x+\c*\x, 1*\r + 0.5*\y) {\small $\gamma$}; 
\node at (0.65*\x+\c*\x, 1.1*\r + 1*\y) {\small $\beta$}; 
\node at (1.35*\x+\c*\x, 1.1*\r + 1*\y) {\small $\beta$}; 

}

\foreach \c in {0,...,\tzz} {

\node at (0.5*\x+\c*\x, 0.1*\r) {\small $\gamma$}; 
\node at (0.5*\x+\c*\x, 1*\r + 0.5*\y) {\small $\gamma$}; 
\node at (0.15*\x+\c*\x, 0.9*\r) {\small $\beta$}; 
\node at (0.85*\x+\c*\x, 0.9*\r) {\small $\beta$}; 

}

\node[inner sep=1,draw,shape=circle] at (0,\r+0.75*\x) {\small $1$};
\node[inner sep=1,draw,shape=circle] at (\x,\r+0.75*\x) {\small $2$};
\node[inner sep=1,draw,shape=circle] at (0.5*\x,0.5*\x) {\small $3$};

\node at (\y,-0.75*\r) {\small $\mathcal{T}$};

\end{scope} 

\end{tikzpicture}
\caption{A timezone given by $T_1, T_3$ and $\mathcal{T} = c-\frac{1}{2}$ timezones, $c=2$}
\label{Fig-a4-be2-gac-Timezones}
\end{figure}

For $c\ge2$, we use $\mathcal{T}$ to denote a block of $c-\frac{1}{2}$ timezones which consist of $c-1$ timezones and one extra tile. Examples with $c=2,3$ are illustrated in Figure \ref{Fig-a4-be2-gac-Timezones} and the first picture of Figure \ref{Fig-a4-families-earth-map-tilings} respectively.

By no $\gamma^c, \alpha^a\gamma^c$, we get
\begin{align}\label{Eq-AVC-be2ga-begac-albegac}
\AVC = \{ \beta^2\gamma, \beta\gamma^c, \alpha\beta\gamma^c \}.
\end{align} 
Since $\alpha$ appears at some vertex, we know that $\alpha\beta\gamma^c$ is a vertex. By Figure \ref{Fig-a4-AAD-be2ga-alga}, we get the central square and its four vertices in Figure \ref{Fig-a4-Tilings-be2ga-albega2}. For any fixed $c\ge2$, the $\gamma^c$-part of each $\alpha\beta\gamma^c$ determines $\mathcal{T}$ in Figure \ref{Fig-a4-Tilings-be2ga-albega2}. The tiling is then completed by another square in the exterior of the \quotes{circular boundary} in Figure \ref{Fig-a4-Tilings-be2ga-albega2}. The minimal member ($c=2$) of the family is given in the second picture.

\begin{figure}[h!] 
\centering
\begin{tikzpicture}

\tikzmath{
\s=1;
}

\begin{scope}[] 

\tikzmath{
\L=0.8;
\th=360/4;
\X=\L*cos(0.5*\th);
\l=2*\X;
}

\foreach \a in {0,...,3}
{
\begin{scope}[rotate=90*\a]

\draw
	(0.5*\l,-0.5*\l) -- (0.5*\l,0.5*\l) -- (1*\l,1*\l) -- (1*\l,2*\l) -- ++(-0.4*\l,-0.3*\l)
	(1*\l,1*\l) -- ++(0.3*\l,-0.3*\l);

\node at (0.3*\l,0.3*\l) {\small $\alpha$};
\node at (-0.4*\l,0.7*\l) {\small $\beta$};
\node at (0.35*\l,0.7*\l) {\small $\gamma^c$};
\node at (1*\l,0.7*\l) {\small $\gamma$};
\node at (1.2*\l,1.1*\l) {\small $\beta$};
\node at (0.8*\l,1.1*\l) {\small $\beta$};
\node at (0.8*\l,1.65*\l) {\small $\gamma$};
\node at (0.6*\l,1.95*\l) {\small $\beta$};
\node at (1.1*\l,2.15*\l) {\small $\alpha$};
\node at (0,1.4*\l) {\small $\mathcal{T}$};

\end{scope}
}	

\draw (0,0) circle (2.24*\l);

\node[rotate=-45] at (1.35*\l,1.5*\l) {\small $\gamma^{c-1}$};
\node[rotate=135] at (-1.35*\l,-1.45*\l) {\small $\gamma^{c-1}$};
\node[rotate=45] at (-1.5*\l,1.35*\l) {\small $\gamma^{c-1}$};
\node[rotate=-135] at (1.45*\l,-1.35*\l) {\small $\gamma^{c-1}$};

\end{scope}

\begin{scope}[xshift=6.5*\s cm] 

\tikzmath{
\r=0.85;
\th=360/4;
\x=\r*cos(0.5*\th);
\R = sqrt(\x^2 + (3*\x)^2);
\aR = acos(3*\x/\R);
}

\foreach \a in {0,...,3}{

;

\draw[rotate=\th*\a]
	(0.5*\th:\r) -- (1.5*\th:\r)
	(\x, \x) -- (\x, 3*\x)
	(-\x, \x) -- (-\x, 3*\x)
	(-\x, 3*\x) -- (\x, 3*\x)
	(-\x, 3*\x) -- (-3*\x, 3*\x) 
	(\x, 3*\x) -- (3*\x, 3*\x) 
	%
;

\node at (45+\th*\a:0.65*\r) {\small $\alpha$}; 

\node at (25+\th*\a:1.05*\r) {\small $\beta$}; 
\node at (65+\th*\a:1.05*\r) {\small $\gamma$};
\node at (76+\th*\a:1.9*\r) {\small $\beta$};
\node at (15+\th*\a:1.9*\r) {\small $\gamma$};

\node at (45+\th*\a:1.35*\r) {\small $\gamma$}; 
\node at (45+\th*\a:2.65*\r) {\small $\gamma$};
\node at (62.5+\th*\a:2.075*\r) {\small $\beta$};
\node at (27.5+\th*\a:2.075*\r) {\small $\beta$};


\node at (45+\th*\a:3.25*\r) {\small $\alpha$}; 

}

\draw (0,0) circle (3*\r);

\node at (0.65*\th: 2.725*\r) {\small $\beta$};
\node at (90-0.9*\aR: 1.1*\R) {\small $\gamma$};
\node at (90+0.9*\aR: 1.1*\R) {\small $\beta$};
\node at (1.375*\th: 2.8*\r) {\small $\gamma$};

\node at (0.6*\th+\th: 2.8*\r) {\small $\beta$};
\node at (90-0.9*\aR+\th: 1.075*\R) {\small $\gamma$};
\node at (90+0.9*\aR+\th: 1.075*\R) {\small $\beta$};
\node at (1.4*\th+\th: 2.8*\r) {\small $\gamma$};

\node at (0.65*\th+2*\th: 2.8*\r) {\small $\beta$};
\node at (90-0.9*\aR+2*\th: 1.1*\R) {\small $\gamma$};
\node at (90+0.9*\aR+2*\th: 1.125*\R) {\small $\beta$};
\node at (1.375*\th+2*\th: 2.8*\r) {\small $\gamma$};

\node at (0.6*\th+3*\th: 2.8*\r) {\small $\beta$};
\node at (90-0.9*\aR+3*\th: 1.075*\R) {\small $\gamma$};
\node at (90+0.9*\aR+3*\th: 1.075*\R) {\small $\beta$};
\node at (1.4*\th+3*\th: 2.8*\r) {\small $\gamma$};

\end{scope} 

\end{tikzpicture}
\caption{The tilings with $\beta^2\gamma, \alpha\beta\gamma^c$ and $2$ squares and $4(2c-1)$ rhombi}
\label{Fig-a4-Tilings-be2ga-albega2}
\end{figure}
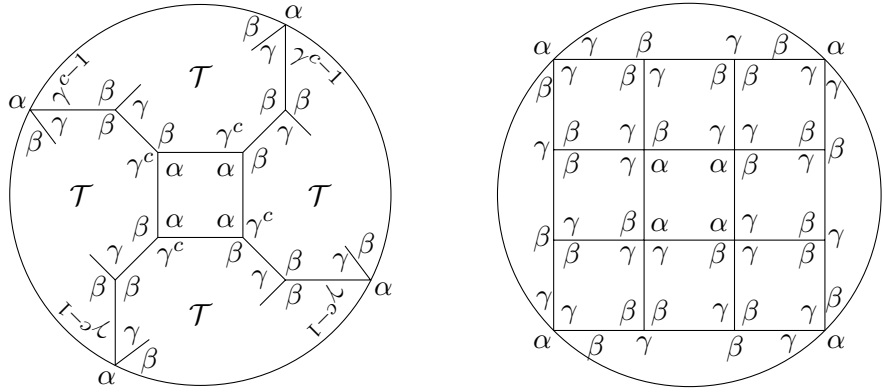


The infinite family of tilings have vertices below
\begin{align}\label{Eq-AVC-be2ga-albegac}
\AVC\equiv\{ \beta^2\gamma, \alpha\beta\gamma^c \}, \quad c\ge2.
\end{align}
Note that $\beta\gamma^c$ is not a vertex for dihedral tilings. There is a monohedral tiling the vertices $\beta^2\gamma, \beta\gamma^{c}$ in \cite{cly}.

Lastly, we show the geometric existence of these tilings. By $\beta^2\gamma, \alpha\beta\gamma^c$, we have
\begin{align} \label{Eq-be2ga-be-al}
\alpha = \pi - (c-\tfrac{1}{2})\gamma, \quad
\beta= \pi - \tfrac{1}{2}\gamma.
\end{align}
The geometric existence of the tilings means $\pi > \beta > \alpha > \frac{1}{2}\pi > \gamma$ and \eqref{Eq-tan-al-be-ga} is satisfied and $0 < \cos x < 1$.

By $\gamma > 0$ and $c\ge2$ and \eqref{Eq-be2ga-be-al}, we have $\beta>\alpha$. Moreover, $\alpha > \gamma$ is equivalent to
\begin{align} \label{Eq-be2ga-c-ga-ineq}
0<(c+\tfrac{1}{2})\gamma<\pi.
\end{align}
It implies $\gamma \in (0, \frac{1}{2}\pi)$. Substituting \eqref{Eq-be2ga-be-al} into \eqref{Eq-tan-al-be-ga}, we get
\begin{align}\label{Eq-be2ga-tan-c-ga}
2\tan^2 ( 2c-1 ) \tfrac{1}{4}\gamma + \tan^2 \tfrac{1}{4}\gamma  - 1 = 0.
\end{align}
From the proof of \eqref{Eq-tan-al-be-ga} (Lemma \ref{Lem-al-be-ga}), we know $\cos x = \cot \frac{1}{2}\beta \cot \frac{1}{2}\gamma$. Then for $\gamma \in (0, \frac{1}{2}\pi)$, the second equation of \eqref{Eq-be2ga-be-al} and \eqref{Eq-tan-al-be-ga} imply $0 < \cos x = \frac{1}{2}( 1 - \tan^2 \frac{\gamma}{4}) < 1$.

Then geometric existence of the tilings is equivalent to the existence of $\gamma$ for each integer $c\ge2$ such that both \eqref{Eq-be2ga-c-ga-ineq}, \eqref{Eq-be2ga-tan-c-ga} are satisfied and $\alpha>\frac{1}{2}\pi$.

The general solution to \eqref{Eq-be2ga-tan-c-ga} is 
\begin{align*}
c_{\pm}=\pm\tfrac{2}{\gamma} (T(\gamma) + k\pi )+\tfrac{1}{2}, \quad
\text{ where } T(\gamma) = \tan^{-1}\sqrt{\tfrac{\tan \frac{1}{4}\gamma}{\tan \frac{1}{2}\gamma}}, \ 
k\in\mathbb{Z}.
\end{align*}
For $\gamma \in (0, \frac{1}{2}\pi)$, we have $0 < T(\gamma) < \frac{1}{2}\pi$. This implies 
\begin{align*}
c_{+} &\in (\tfrac{1}{2}, \tfrac{\pi}{\gamma} + \tfrac{1}{2}) + \tfrac{2k\pi}{\gamma}, \\
c_{-} &\in (\tfrac{1}{2} - \tfrac{\pi}{\gamma}, \tfrac{1}{2}) - \tfrac{2k\pi}{\gamma}.
\end{align*}
Then by \eqref{Eq-be2ga-c-ga-ineq} we get $k=0$. For $k=0$, we have $c_{-} < \frac{1}{2}$, contradicting $c\ge2$. It remains to discuss $c_{+}$ for $k=0$, which gives
\begin{align} \label{Eq-be2ga-albegac-c(ga)}
c(\gamma) = \tfrac{2}{\gamma} T(\gamma) +\tfrac{1}{2}.
\end{align}

By the first equality in \eqref{Eq-be2ga-be-al}, the inequality $\alpha>\frac{1}{2}\pi$ is equivalent to $(c-\frac{1}{2})\gamma<\frac{1}{2}\pi$. By \eqref{Eq-be2ga-albegac-c(ga)}, it is equivalent to $T(\gamma)<\frac{1}{4}\pi$. Since $\tan t$ is strictly increasing on $(0,\frac{1}{2}\pi)$, for $\gamma\in (0, \frac{1}{2}\pi)$ we have $\tan\frac{1}{4}\gamma<\tan\frac{1}{2}\gamma$ and hence $T(\gamma) < \frac{1}{4}\pi$ holds.

It is straightforward to derive that $c(\gamma)$ in \eqref{Eq-be2ga-albegac-c(ga)} satisfies \eqref{Eq-be2ga-c-ga-ineq} if and only if $\tan \frac{1}{4}\gamma \tan \frac{1}{2}\gamma < 1$, which holds for $\gamma \in (0, \frac{1}{2}\pi)$.

We plot $c(\gamma)$ against $\frac{\gamma}{\pi}$ in Figure \ref{Fig-a4-be2ga-albegac-c(ga)}.
\begin{figure}[htp]
\centering
\includegraphics[scale=0.5]{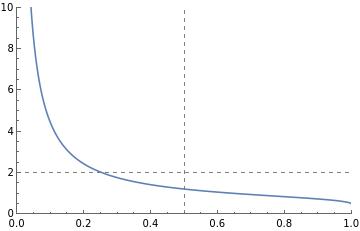}
\caption{Graph of $c(\gamma)$ against $\tfrac{\gamma}{\pi}$}
\label{Fig-a4-be2ga-albegac-c(ga)}
\end{figure}

It remains to show that, for any integer $c\ge2$, there is a $\gamma_c \in (0, \frac{1}{2}\pi)$ satisfying $c(\gamma_c)=c$. One can show that \eqref{Eq-be2ga-albegac-c(ga)} is continuous and decreasing on $(0, \frac{1}{2}\pi)$ and 
\begin{align*}
\lim_{\gamma \to 0^+} c(\gamma)= + \infty.
\end{align*}
On the other hand, we have $c(\frac{1}{2}\pi)\approx 1.228 < 2$. Then by Intermediate Value Theorem, the desired $\gamma_c$ exists and is unique.

Therefore the infinite family of tilings in Figure \ref{Fig-a4-Tilings-be2ga-albega2} exist.
\end{proof}

\end{document}